\definecolor{webgreen}{rgb}{0,.5,0}
\definecolor{webbrown}{rgb}{.6,0,0}
\newcommand{\seqnum}[1]{\href{http://oeis.org/#1}{\underline{#1}}}
\newcommand{\bburl}[1]{\textcolor{blue}{\url{#1}}}
\theoremstyle{plain}
\newtheorem{thm}{Theorem}[section]
\newtheorem{lem}[thm]{Lemma}
\theoremstyle{definition}
\newtheorem{defi}[thm]{Definition}
\newtheorem{algorithm}[thm]{Algorithm}
\theoremstyle{remark}
\newtheorem{rek}[thm]{Remark}
\newcommand\be{\begin{equation}}
\newcommand\ee{\end{equation}}
\newcommand\bea{\begin{eqnarray}}
\newcommand\eea{\end{eqnarray}}
\newcommand\bi{\begin{itemize}}
\newcommand\ei{\end{itemize}}
\newcommand\ben{\begin{enumerate}}
\newcommand\een{\end{enumerate}}
\newcommand{\su}{\sum\limits}
\newcommand{\sui}[1]{\su_{i=#1}^\infty}
\newcommand{\z}{\mathbb{Z}}
\newcommand{\sfr}{\text{\textsc{SF}}}
\newcommand{\rtsf}{\text{\textsc{RTSF}}}
\numberwithin{equation}{section}
\begin{document}

\begin{center}
\vskip 1cm{\LARGE\bf
Walking to Infinity Along Some Number Theory Sequences
}
\vskip 1cm
\large
Steven J.\ Miller\\
Department of Mathematics and Statistics\\
Williams College\\
Williamstown, MA 01267\\
USA \\
\href{mailto:sjm1@williams.edu}{\tt sjm1@williams.edu} \\
\ \\
Fei Peng\\
Department of Mathematics\\
National University of Singapore\\
Singapore 138601\\
Singapore \\
\href{mailto:pfpf@u.nus.edu}{\tt pfpf@u.nus.edu} \\
\ \\
Tudor Popescu\\
Department of Mathematics\\
Brandeis University\\
Waltham, MA 02453\\
USA \\
\href{mailto:tudorpopescu@brandeis.edu}{\tt tudorpopescu@brandeis.edu} \\
\ \\
Joshua M.\ Siktar\\
Department of Mathematics\\
University of Tennessee\\
Knoxville, TN 37916\\
USA \\
\href{mailto:jsiktar@vols.utk.edu}{\tt jsiktar@vols.utk.edu} \\
\ \\
Nawapan Wattanawanichkul\\
Department of Mathematics\\
University of Illinois Urbana-Champaign\\
Urbana, IL 61801\\
USA\\
\href{mailto:nawapan2@illinois.edu}{\tt nawapan2@illinois.edu} \\
\ \\
The Polymath REU Walking to Infinity Group\\
\ \\
\end{center}

\vskip .2 in

\begin{abstract}
An interesting open conjecture asks whether it is possible to
walk to infinity along primes, where at each step we add one more digit somewhere in the prime. We present different greedy models for prime walks to predict the long-time behavior of the trajectories of orbits, one of which has similar behavior to the actual backtracking one. Furthermore, we study the same conjecture for square-free numbers, which is motivated by the fact that they have a strictly positive density, as opposed to the primes. We introduce stochastic models and analyze the walks' expected lengths and the frequencies of the digits added. Lastly, we prove that it is impossible to walk to infinity in other important number-theoretical sequences, such as perfect squares or primes in different bases.
\end{abstract}

\section{Introduction}\label{sec:introduction}

\subsection{Background}\label{sec:background}
This paper is motivated by a simple question: is it possible to walk to infinity along the primes? By this we mean starting with a prime number, appending one digit to it to form a new prime, and repeating endlessly. Note that if each time we are appending to the left an unlimited number of digits, the answer would be positive: one can prove this using Dirichlet's theorem for primes in arithmetic progression: given any prime $p$ other than $2$ and $5$, choose some integer $m$ so that $10^m > p$. As $10^m$ and $p$ are relatively prime, there are infinitely many primes congruent to $p$ (mod $10^m$). Any such prime is obtainable by appending digits to the left of $p$. We can then repeat this process to walk to infinity.

Another natural interpretation is to append one digit at a time, to the right. This greatly reduces the likelihood of an infinite walk. For example, one may start a walk as $3, 31, 317$, and find that $317$ cannot be extended further (by one digit to stay a prime). In fact, starting with any one-digit prime, the longest ``prime walk" (via appending one digit a time to the right) always has length $8$. For example, the optimal walk sequence starting with $3$ is $$\{3, 37, 373, 3733, 37337, 373379, 3733799, 37337999\}.$$

To see this, we introduce the notion of a \textit{right truncatable prime}, which is a prime that remains prime after removing the rightmost digits successively. It is known that there are exactly $83$ right truncatable primes, with the largest one being $73939133$ \cite{TP}. Every right truncatable prime with $d$ digits corresponds to a prime walk of length $d$ starting with a one-digit prime (and vice versa), so the longest such walk has length $8$. 

Without the one-digit starting-point restriction, it is possible to have longer walks: $$\{19, 197, 1979, 19793, 197933, 1979339, 19793393, 197933933, 1979339333\}$$ is a walk with step size 1 and length $9$, while $$\begin{gathered}\{409, 4099, 40993, 409933, 4099339, 40993391, 409933919, 4099339193,\\40993391939, 409933919393, 4099339193933\}\end{gathered}$$ is one of length $11$. In particular, an exhaustive search shows that the above is the longest prime walk with a starting point less than $1{,}000{,} 000$, tied with $$\begin{gathered}\{68041, 680417, 6804173, 68041739, 680417393, 6804173939, 68041739393,\\ 680417393939, 6804173939393, 68041739393933, 680417393939333\}.\end{gathered}$$ On the other hand, some small primes do not have long walks, such as $11$, whose longest walk is $\{11, 113\}$, and $53$, which fails immediately as the numbers between $530$ and $539$ are all composite.

This discussion suggests two central questions.

\begin{itemize}

\item Is it possible to walk to infinity along the primes, where each prime in the sequence is the result of appending one digit to the right of the previous? From the last observation, we cannot do so by starting with a one-digit prime. Some remainder analysis shows that if there is an infinite prime walk (in base $10$), it eventually only appends $3$'s and $9$'s. To clarify, in view of remainders modulo $2$ and $5$, we can never append an even number or a $5$. Then consider remainders modulo $3$. As $1$ and $7$ are both congruent to $1$ (mod $3$), appending them would increase the remainder by 1. Appending $3$ or $9$ would leave the remainder unchanged. As we need to avoid $0$ (mod $3$) at all times, we can append $1$ or $7$ at most once (twice when starting at $3$, but $3$ is already not a promising starting point), and must only use $3$'s and $9$'s afterward.

\item What if, instead of appending just one digit, we append at most a bounded number of digits to the right? More generally, what if the number of digits we append in moving from $p_n$ to $p_{n+1}$ is at most $f(p_n)$ for some function $f$ tending to infinity? Unlike the case of appending to the left, we cannot immediately deduce the answer by appealing to Dirichlet's theorem for primes in arithmetic progressions. 

\end{itemize}

%%%%%%%%%%%%%%%%

\subsection{Stochastic models}\label{sec:stochasticmodels}

Like most problems in number theory, the aforementioned questions are easy to state but resist progress. We thus consider instead related random problems to try and get a sense of what might be true. Such models have been used elsewhere with great success, from suggesting there are only finitely many Fermat primes to the veracity of the Twin Prime and Goldbach conjectures. 

For example, recall the $n$\textsuperscript{th} Fermat number is $F_n = 2^{2^n} + 1$. The prime number theorem says that the number of primes up to $x$ is about $x/\log x$, and thus one often models a randomly chosen number of order $x$ as being prime with probability $1/\log x$. This is the famous Cram\'er model; while it is known to have some issues \cite[pp.\ 507--514]{MS}, it gives reasonable answers for many problems. If we let $\{Z_n\}$ be independent Bernoulli random variables where $Z_n = 1$ with probability $1/\log F_n$, then the expected number of $Z_n$'s that are 1 (and thus the expected number of Fermat primes) is 
\be \mathbb{E}\left[\sum_{n=0}^\infty Z_n\right] \ = \ \sum_{n=0}^\infty \frac1{\log (2^{2^n} + 1)} \ \approx \ 2.24507722, \ee which is reasonably close to the number of known Fermat primes, five, coming from $n \in \{0,1,2,3,4\}$.

As primes lack much inherent structure, we ask related questions of other sequences, such as square-free numbers. From our heuristic model and numerical explorations, we do not believe one can walk to infinity through the primes by adding a bounded number of digits to the right; however, we believe it is possible for square-free numbers. For example, starting with $2$, we can get a really long walk just by always appending the smallest digit that yields a square-free number. The following sequence only shows the first $17$ numbers obtained using this greedy approach. We do not expect this sequence to terminate soon.
$$\begin{gathered} \{2,\ 21,\ 210,\ 2101,\ 21010,\ 210101,\ 2101010,\ 21010101,\ 210101010,\ 2101010101,\\
 21010101010,\ 210101010101,\ 2101010101010,\ 2101010101010102,\\
210101010101021,\ 2101010101010210,\ 21010101010102101,\ \ldots \}.\end{gathered}$$

While the fraction of numbers at most $x$ that are prime is approximately $1/\log{x}$, which tends to zero, the fraction which are square-free tends to $1/\zeta(2) = 6/\pi^2$, or about $60.79\%$ (for more details, see Section \ref{sec:rmlesfw}). Thus, there are tremendously more square-free numbers available than primes. In particular, once our number is large, it is unlikely that \emph{any} digit can be appended to create another prime. Thus, it should be impossible to walk to infinity among the primes by appending just one digit on the right. However, for square-free numbers, we expect to have several digits that we can append and stay square-free, leading to exponential growth in the number of paths.

Computationally, a bottleneck of investigating prime or square-free walks is the hardness of factorization, which is necessary to determine whether the current number is prime/square-free. To overcome this difficulty, we describe fast, stochastic models that approximate the actual walks.

Explicitly, we consider the following random processes: given a sequence whose last term is $x$, we want to assign an appropriate probability of being able to append an additional digit to the right. We assume each term is independent of the previous, and the probability that a digit can be appended to $x$ is $p(x)$. Thus, the probability will decrease as $x$ increases for primes but is essentially constant for square-free numbers. Furthermore, for prime walks, we present two different models: the first one randomly selects a digit among $1$, $3$, $7$, and $9$ and appends it to the number, while the second (refined) random model first checks what digits yield a prime number in the next step and then randomly selects one from the set. We assume all numbers with the same number of digits are equally likely to be in the sequence for simplicity. For the primes base 10, we cannot append a digit that is even or a $5$, whereas, for square-free numbers, we cannot append a digit such that the sum of the digits is $9$ or the last two digits are a multiple of $4$. One could consider more involved models taking these into account.

We approximate that if a number has $k$ digits, the number of primes of $k$ digits in base~$b$ is \be\label{eq:BaseBPrimeCount}\frac{b^k}{\log{b^k}} - \frac{b^{k-1}}{\log{b^{k-1}}} \ = \ \frac{b^{k-1}}{\log{b}} \cdot \left(\frac{b}{k} - \frac{1}{k-1} \right) \ = \ \frac{b^{k-1}((k-1)b - k)}{k(k-1)\log{b}} \approx \frac{(b-1) \cdot b^{k-1}}{k\log{b}}.\ee Since there are $(b-1)b^{k-1}$ numbers with exactly $k$ digits in base $b$, we assume the probability that a $k-$digit number is prime is $1/{(k\log{b})}$, and assume that the events of two distinct numbers being prime are independent.

Our main focus is the expected value and distribution of lengths of walks under these stochastic settings. Such probabilistic models have had remarkable success in modeling other problems, such as the $3x+1$ map and its generalizations \cite{KL}. They also have several issues. In particular, we assume that the numbers formed by appending the digits under consideration are all independent in our desired sequence. However, this yields a simple model with easily computed results on how long we expect to be able to walk in the various sequences from different starting points.

In the rest of the paper, the \textit{expected walk length} naturally refers to the expected value of the length of the walk. Typically, there is either an explicit, finite collection (or sample) of walks to empirically determine the expected length from, or a corresponding model for a walk, under which the expected walk length can be computed theoretically from the model's definition.

%%%%%%%%%%5

\subsection{Results}\label{sec:results}

We compare the random model with observations of the actual sequences. We present the two random models for prime walks and show that the refined one is very close, in some sense, to the actual sequence. In particular, when considering prime walks with starting point less than a million, the difference between the experimental expected length for the careful greedy model and the real expected value for the primes is $0.14$, less than $7$ percent of the expected length of the real prime walks of $2.07$. 

Furthermore, we note that the model becomes more precise as the starting point increases, and the prime numbers become more sparse. As the starting point increases, the number of primes from which we randomly choose to continue decreases. Then, we also look at the frequency of the digits added at each step and see that the refined model approximates the real world extremely well. Lastly, while we discuss infinite prime walks, we extend our predictions for the case where we are allowed to insert a digit anywhere, rather than only to the right.

On the other hand, when investigating square-free walks, we present the experimental expected length of our random models. Furthermore, we remark on the discrepancies in the frequencies of added digits, and give the number-theoretic reasons for these discrepancies.

Although we use stochastic models for prime and square-free walks, there are some sequences and restrictive scenarios for which we can prove several results regarding walks to infinity, for example, prime walks in base $2, 3, 4, 5$, and $6$, and on perfect squares. 

A related problem is the Gaussian Moat problem, which asks whether it is possible to walk to infinity on Gaussian primes with steps of bounded length. Extensive research has been done on this. For example, Gethner, Wagon, and Wick in \cite{GWW} and Loh in \cite{Loh} proved numerous results related to the problem. Some of the authors of this article examined the behavior of prime walks in different number fields \cite{Li} and proved that it is impossible to walk to infinity on primes in $\mathbb{Z}[\sqrt{2}]$ if the walk remains within some bounded distance from the asymptotes $y = \pm x/\sqrt{2}$.

The main results of the current study are summarized as follows.

\vspace{0.3 cm}

\textit{Prime walks}
\begin{itemize}
    \item Expressions for the expected prime walk lengths under different models are given by equations \eqref{eq:theoreticalModel1Simplified}, \eqref{eq:EV-refined}.
    
    \item Comparisons of the two prime walk models and the actual primes can be found in Tables \ref{table:primecomparison}, \ref{table:appending1379<100000}, \ref{table:appending1379}, and \ref{table:appending1379>100000}.
    
    \item A proof that it is impossible to walk to infinity on primes in base $2$ by appending no more than $2$ digits is given in Theorem \ref{prime-base-2}, while Lemma \ref{lem:base3To6} shows that it is impossible to walk to infinity on primes in bases $3, 4, 5$, or $6$ by appending one digit to the right.
\end{itemize}

\textit{Square-free walks}
\begin{itemize}
    \item The expected lengths of square-free walks given by our models are presented in Tables \ref{table:square-freecomparison} and \ref{table:greedysquare-freecomparison}, while Theorem \ref{thm:highprobSF} shows that there exists an infinite random square-free walk from most starting points.
    
    \item Table \ref{table:square-freefreq} presents the frequencies of the digits added in square-free walks, and Remark \ref{sq-free-observations} explains why some digits appear more often than others.
    
    \item Theorems \ref{E2estimate} and \ref{E10estimate} give tight bounds on the theoretical expected lengths of square-free walks in bases $2$ and $10$, respectively.
    \end{itemize}

%%%%%%%%%%%%%

\section{Initial models of prime walks}\label{sec:modelingPrimeWalks}

\subsection{Models}\label{sec:rmlegpw}
We now estimate the length of these random walks in base $b$, so there are $b$ digits we can append. If our number has $k$ digits, then from \S\ref{sec:stochasticmodels}, the probability a digit yields a successful appending is approximately $1/{(k\log{b})}$, as we are assuming all possible numbers are equally likely to be prime. For example, if $b=10$, we are not removing even numbers or 5 or numbers that make the sum a multiple of 3. Thus, the probability that at least one of the $b$ digits works is $1$ minus the probability they all fail, or 

\be \label{ModelAEq}
1 - \left(1 - \frac1{k\log{b}}\right)^b. 
\ee

The first stochastic model for prime walks can be described as follows. 

\begin{algorithm}[Greedy Prime Walk in Base $b$]\label{GreedyPrimeWalkDesignation}
Each possible appended number is independently declared to be a random prime with probability as described by the reasoning used to deduce \eqref{ModelAEq}. Choose one of the admissible digits uniformly at random and check if the obtained number is prime; if it is not, stop and record the length; otherwise, continue the process. 
\end{algorithm}

 This algorithm can be imagined as a greedy prime walk, as we are not looking further down the line to see which of many possible random primes would be best to choose to get the longest walk possible. We call this the greedy model. Furthermore, note that we may easily improve the model in base $10$ by appending from $\{1, 3, 7, 9\}$. We discuss this improvement later in this section, and compare it to this initial greedy model. 

In order to compute the theoretical expected length of such a walk, starting at a one-digit random prime in base $b$, we count the probabilities in two different ways; note that the expected length is just the infinite sum of the probabilities that we stop at the $n$\textsuperscript{th} step times $n$. For brevity, let $A_n$ denote the event that the walk has length at least $n$, and $B_n$ denote the event that the walk has length exactly $n$. Since it is clear that the collection of events $\{B_i\}^{\infty}_{i = 1}$ is pairwise independent and that $A_n = \cup_{i=n}^{\infty} B_i$, it follows that
\be\label{eq:ABexpansionprimes} \sum_{n=1}^\infty \mathbb{P}[\text{walk has length at least }n] \ =\ \sum_{n=1}^{\infty}n\mathbb{P}[\text{walk has length exactly }n].\ee
Note that the sum on the right hand side of \eqref{eq:ABexpansionprimes} is the expected walk length in our greedy model, while the sum on the left equals
\be\label{eq:expectedlengthsimple}
    \sum_{n=0}^{\infty}\prod_{k = 1}^{n-1} \left(1 - \left(1 - \frac{1}{k\log{b}}\right)^{b}\right),
\ee
where each term in the sum represents the probability that there is a random prime with which we can extend the walk for the first $n - 1$ steps, without considering the $n^{\text{th}}$ step.
In particular, the expected length in base $10$ when starting with a single digit is $4.690852$, however we are also interested in other bases. With that in mind, we denote by $Y_{s, b}$ the random variable indicating the length of a prime walk with a chosen prime starting point with at most $s$ digits in base $b$.  Furthermore, by multiplying by the approximate number of primes with exactly $r$ digits and dividing by the approximate number of primes with at most $s$ digits, we get that the theoretical expected length of a walk with a starting point at most $s$ digits in base $b$ is
\begin{equation}\label{eq:theoreticalModel1}
E[Y_{s, b}] \ = \ \frac{1}{\frac{b^s}{s\log{b}}}\left(\sum_{r=1}^{s}\frac{(b-1)b^{r-1}}{r\log{b}}\left(\sum_{n=0}^{\infty}\prod_{k = r}^{n-1} \left(1 - \left(1 - \frac{1}{k\log{b}}\right)^{b}\right)\right)\right),
\end{equation}
which simplifies to
\begin{equation}\label{eq:theoreticalModel1Simplified}
E[Y_{s, b}] \ = \ \frac{s(b-1)}{b^s}\left(\sum_{r=1}^{s}\frac{b^{r-1}}{r}\left(\sum_{n=0}^{\infty}\prod_{k = r}^{n-1} \left(1 - \left(1 - \frac{1}{k\log{b}}\right)^{b}\right)\right)\right).
\end{equation}
Table \ref{table:numdigitsstartingpoint} contains the expected lengths as we vary the starting point and base; one can view this model as a greedy random prime walk because we always take another step if possible, with no regard to how many steps we may be able to take afterward; thus, all decisions are ``local."

Note that the theoretical expected length of the walk in base $10$ starting with a one-digit number, $4.22$, is different than the one we computed earlier, $4.69$. This is because we multiplied $4.69$ by the approximation factor ${(b-1)}/{b}$; i.e., $0.9$. More importantly, note that in base $10$ we can only append $\{1, 3, 7, 9\}$ and hope to stay prime since primes greater than $5$ are odd and not divisible by $5$.

This suggests a simple improvement to the model base $10$: \emph{we only allow the four digits $1$, $3$, $7$, and $9$ to be appended on the right.} Henceforth, we will only use this improved version. To do this, we have to make a couple of changes in the formula \eqref{eq:theoreticalModel1Simplified}: first, we replace the numerator of ${1}/{(k \log b)}$ with $10/4$, as we have the same number of primes despite having less freedom; furthermore, instead of raising $1 - 10/{(4k \log b)}$ to the $b^{\text{th}}$ power (in this case, $10$), we raise it to the fourth power as only 4 options are left. We shall call this the \emph{greedy model}. The expected walk length under this model is presented in Table \ref{table:numdigitsstartingpoint} as 10'. 

Denote by $\widetilde{Y_{s, b}}$ the random variable denoting the length of walking according to this modified algorithm. By modifying our earlier analysis, we obtain the formula for the expected length of this model in base $b$ to be
\be \label{eq:EV-refined} E[\widetilde{Y_{s, b}}] \ = \ \frac{s(b-1)}{b^s}\left(\sum_{r=1}^{s}\frac{b^{r-1}}{r}\left(\sum_{n=0}^{\infty}\prod_{k = r}^{n-1} \left(1 - \left(1 - \frac{b}{\phi(b)k\log{b}}\right)^{\phi(b)}\right)\right)\right).\ee
where $\phi(n)$ denotes Euler's totient function.
 
\renewcommand{\arraystretch}{1.5}
\begin{table}[ht]
\centering
\begin{tabular}{p{5pt}c|ccccccc}
\multicolumn{2}{}{} & \multicolumn{7}{c}{Maximum number of digits of starting point} \\
     & & 1    &  2   & 3    & 4  & 5  & 6 & 7   \\ \cline{2-9}
\multirow{9}{*}{\rotatebox[origin=c]{90}{Base}}
&  2  & 5.20 &  9.90 &  11.62 & 11.45 & 10.40 & 9.08 &  7.79 \\
 & 3  & 5.05 & 7.75 &  7.60 & 6.53 & 5.40 & 4.49 & 3.80 \\
  & 4 & 4.87 & 6.55 &  5.86 & 4.79 & 3.92 & 3.29 & 2.85 \\
  & 5 & 4.71 & 5.79 &  4.92 & 3.96 & 3.25 & 2.78 & 2.45 \\
  & 6 & 4.57 & 5.27 &  4.34 & 3.48 & 2.89 & 2.49 & 2.22 \\
  & 7 & 4.46 & 4.89 &  3.95 & 3.17 & 2.65 & 2.31 & 2.08 \\
  & 8 & 4.37 & 4.59 &  3.67 & 2.95 & 2.49 & 2.19 & 1.98 \\
  & 9 & 4.29 & 4.36 &  3.45 & 2.79 & 2.37 & 2.09 & 1.91 \\
  & 10 & 4.22 & 4.17 & 3.28 & 2.66 & 2.28 & 2.20 & 1.85
  \\
  & 10' & 4.54 & 4.55 & 3.55 & 2.83 & 2.38 & 2.09 & 1.90
\end{tabular}
\caption{\label{table:numdigitsstartingpoint} Expected length of prime walks given by our formula \eqref{eq:theoreticalModel1}, 10' is the blind limited model; this latter model is described in more detail in Section \ref{subsec:refinedGreedy}.}
\end{table}
Our second model is the \emph{careful greedy model}. 

\begin{algorithm}[Careful greedy algorithm]\label{alg:refinedGreedy}
    At each step, we check whether appending any of $1, 3, 7$, or $9$ to the right yields a prime. If there are multiple digits that yield primes, the model selects one of them uniformly at random, and continues the process.
\end{algorithm}

This is a more refined version of Algorithm \ref{GreedyPrimeWalkDesignation}. Indeed, we first check whether any of the numbers obtained after appending an admissable digit is prime; if there are multiple, select one at random, and if there are none stop the process. Obviously, this algorithm approximates the real-world data better than Algorithm \ref{GreedyPrimeWalkDesignation}, but this comes at a computational cost, as at each step we have to check whether $4$ numbers are prime instead of just $1.$

 Lastly, in the \emph{primes} model, we use backtracking to find the longest walk starting at a prime.

While Table \ref{table:numdigitsstartingpoint} presents the expected length of prime walks given by formulas \eqref{eq:theoreticalModel1Simplified} and \eqref{eq:EV-refined}, Tables \ref{table:primecomparison}, \ref{table:appending1379<100000}, \ref{table:appending1379}, \ref{table:appending1379>100000}, and \ref{table:primecomparison2mod3} show the data obtained by computer simulations on our previously described models. We note that this latter table records walks based on the exact number of digits, rather than the maximum number, at the start.

%%%%%%%%%%%%%%5

\subsection{Results and comparison of models}\label{subsec:resultsAndComparison}

According to the random probabilistic model of prime walks in \S\ref{sec:rmlegpw}, the expected length of a greedy prime walk, starting with a single digit prime in base 10, is 4.69. We compare this heuristic estimate with the primes.

We present the results of our computer simulations for our blind unlimited and careful limited models in Tables \ref{table:primecomparison}, \ref{table:appending1379<100000}, \ref{table:appending1379}, and \ref{table:appending1379>100000}. The careful greedy model is rather close to the real data whereas the greedy one still predicts some behaviors of the walks. The data for the actual primes is computed by the program that exhaustively searches for the longest prime walk given a starting point. First, let us observe how the number of digits of the starting point affects the expected walk length of the models in Table \ref{table:primecomparison}: it shows that the expected length of the walks decreases as the starting point increases both theoretically and in our random model.

\begin{table}[h!]
\centering
 \begin{tabular}{||c c c c c c c c||}
 \hline
 Start has $r$ digits & $0$ & $1$ & $2$ & $3$ & $4$ & $5$ & $ 6$\\
 \hline\hline
 Blind unlimited model & 1.00 & 1.86 & 1.60 & 1.41 & 1.31 & 1.25 & 1.21 \\
 \hline
 Careful limited model & 4.77 & 5.01 & 3.41 & 2.79 & 2.38 & 2.09 & 1.88\\
 \hline
 primes & 8.00 & 8.00 & 4.71 & 3.48 & 2.71 & 2.28 & 2.03 \\
 \hline
\end{tabular}
\caption{\label{table:primecomparison} Comparison of the expected walk lengths}
\end{table}

Furthermore, we analyze the frequency of digits added in the prime walks under base 10, both for the actual primes and in our models. We originally hypothesized that 3 and 9 appear more often than 1 and 7. This is because 1 and 7 cannot be appended when we start with a prime that is $2$ (mod $3$). We present the frequency of digits in Table \ref{table:appending1379} when the starting point is less than $1,000,000$. As expected, in both our models and the real prime walks, the number of appended 3's is very close to the number of appended 9's while the number of appended 1's is very close to the number of appended 7's. One surprising observations is that there are significantly more 7's in the random models than in the real prime walks. We observe how the starting point affects the frequency of the digits added in Tables \ref{table:appending1379<100000}, \ref{table:appending1379}, and \ref{table:appending1379>100000}.

As mentioned above, we observe that the number of appended $3$'s and $9$'s is larger than the number of appended $1$'s and $7$'s. This is due to the fact that by modulo $3$ considerations, we can only append $3$ or $9$ to a number $2$ (mod $3$) to keep it a prime. In particular, when the starting number is $2\pmod3$, we must append $3$'s and $9$'s, and when it is $1\pmod3$, we can append $1$ or $7$ at most once, and every other digit appended must be $3$ or $9$.\footnote{Since appending 2, 5 or 8 is forbidden for being divisible by 2 or 5, appending a digit either preserves the remainder modulo 3 (the case when appending 3 or 9), or increments it by~1 (the case when appending~1 or~7). In a prime walk, the remainder must never be zero, hence leaving at most one chance of appending~1 or~7 (combined) when the starting number is $1\pmod3$, and no chance at all when it is $2\pmod3$.} We present our model when starting with $2$ (mod $3$) in the following section. Furthermore, this bias will be seen in our models: indeed, if a number is composite after appending a digit, the digit will not be counted for. As the probability of a number being prime immediately after appending $3$ or $9$ is higher than that of being prime immediately after appending $1$ or $7$, the frequency of $3$'s and $9$'s will be higher than that of $1$'s and $7$'s, as can be seen in Tables \ref{table:appending1379<100000}, \ref{table:appending1379}, and \ref{table:appending1379>100000}.

\begin{table}[H]
\centering
 \begin{tabular}{||c c c c c||}
 \hline
 Number appended & 1's & 3's & 7's & 9's \\
 \hline\hline
 Blind unlimited model & 15.6\% & 33.0\% & 19.9\% & 31.3\% \\
 \hline
 Careful limited model & 11.8\% & 36.7\% & 14.2\% & 37.1\% \\
 \hline
 Primes & 12.1\% & 40.2\% & 11.1\% & 36.5\% \\ 
 \hline
\end{tabular}
\caption{\label{table:appending1379<100000} Frequency of added digits in prime walks with starting point less than $100,000$.}
\end{table}

\begin{table}[H]
\centering
 \begin{tabular}{||c c c c c||}
 \hline
 Number appended & 1's & 3's & 7's & 9's \\
 \hline\hline
 Blind unlimited & 15.4\% & 32.7\% & 18.5\% & 33.2\% \\
 \hline
 Careful limited model & 12.5\% & 35.9\% & 14.7\% & 36.8\% \\
 \hline
 Primes & 13.1\% & 38.8\% & 12.2\% & 35.6\% \\
 \hline
\end{tabular}
\caption{\label{table:appending1379} Frequency of added digits in prime walks with starting point less than $1,000,000$.}
\end{table}

\begin{table}[H]
\centering
 \begin{tabular}{||c c c c c||}
 \hline
 Number appended & 1's & 3's & 7's & 9's \\
 \hline\hline
Blind unlimited model & 16.3\% & 32.3\% & 18.5\% & 32.8\% \\
 \hline
 Careful limited model & 12.7\% & 35.8\% & 14.8\% & 36.4\% \\
 \hline
 Primes & 13.3\% & 38.6\% & 12.4\% & 35.5\% \\
 \hline
\end{tabular}
\caption{\label{table:appending1379>100000} Frequency of added digits in prime walks with starting point greater than $100,000$ but less than $1,000,000$.}
\end{table}

We defer a more in-depth discussion of the case where our starting number is $2$ (mod $3$) to Appendix \ref{2mod3}. In the meantime, we use the stochastic prime walks presented thus far to motivate the results of the next subsection, which give conditions on which prime walks are impossible.

%%%%%%%%%%

%%%%%%%%%%%
%%%%%%%%%%%

\subsection{Main results for prime walks}\label{primeWalkProof}

As mentioned in the introduction, it is possible to walk to infinity on primes by appending an unbounded number of digits to the left at each step. We now show that this statement's counterpart is also true, namely that it is possible to walk to infinity on primes by appending an unbounded number of digits to the right.

\begin{thm}\label{rightunbounded}
    Let $p_0$ be a prime. Then there exists a sequence of infinitely many primes $p_0, p_1, \ldots$ such that for all $i\ge1$, $p_i$ is equal to $10^{n_i}\cdot p_{i-1}+k_i$, for positive integers $n_i$ and $k_i$ with $k_i<10^{n_i}$. 
\end{thm}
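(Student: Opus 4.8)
The plan is to first reduce the theorem to a single appending step and then iterate. The condition that $p_i = 10^{n_i} p_{i-1} + k_i$ with $0 < k_i < 10^{n_i}$ says precisely that $p_i$ lies in the open interval $(10^{n_i} p_{i-1},\, 10^{n_i}(p_{i-1}+1))$; equivalently, $p_i$ is obtained by writing the digits of $p_{i-1}$ and then appending the $n_i$-digit block encoding $k_i$ (possibly with leading zeros). So the whole theorem follows by induction from the following claim: for every positive integer $m$ there exist $n \geq 1$ and a prime $q$ with $10^n m < q < 10^n(m+1)$. Taking $m = p_{i-1}$ (which need not even be prime for the claim to apply), setting $p_i = q$, $n_i = n$, and $k_i = q - 10^n p_{i-1}$ then produces the next term, with $k_i \geq 1$ automatic since $10^n p_{i-1}$ is divisible by $10$ and hence not prime.

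The key observation for the claim is that the interval $(10^n m,\, 10^n(m+1))$ has constant \emph{multiplicative} width: its endpoints have ratio $(m+1)/m = 1 + 1/m$, which does not depend on $n$. Thus, although the interval is pushed out toward infinity as $n$ grows, it never becomes relatively short. Fixing $m$ and writing $\varepsilon = 1/m > 0$, I would invoke the standard consequence of the Prime Number Theorem that for every fixed $\varepsilon > 0$ there is an $X_0 = X_0(\varepsilon)$ such that the interval $(x,\, (1+\varepsilon)x)$ contains a prime for all $x > X_0$ (indeed it contains $\sim \varepsilon x / \log x \to \infty$ of them). Choosing $n$ large enough that $10^n m > X_0$ then guarantees a prime $q$ in $(10^n m,\, 10^n(m+1))$, proving the claim.

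With the claim in hand, the induction is immediate: starting from the given prime $p_0$, apply the claim repeatedly with $m = p_0, p_1, p_2, \ldots$ to generate $p_1, p_2, \ldots$ together with the corresponding exponents $n_i$ and digit-blocks $k_i$, and the process never terminates.

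I do not expect a genuine obstacle here; the content is the clean reduction together with the recognition that the relevant interval has fixed multiplicative length, so that the Prime Number Theorem applies uniformly in $n$. If one wished to avoid even the PNT, the same conclusion follows from any effective result guaranteeing a prime in $(x,\, (1+\varepsilon)x)$ for large $x$, so the only point requiring care is citing an appropriate primes-in-intervals input rather than carrying out any intricate argument.
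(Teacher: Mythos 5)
Your proof is correct, and it shares the paper's overall skeleton---both arguments reduce the theorem to the one-step claim that for a given integer $m$ there exist $n$ and a prime in $(10^n m,\,10^n(m+1))$, and then iterate---but the key input is genuinely different. The paper realizes the one-step claim by fitting a Baker--Harman--Pintz short interval inside the target interval: it takes $r=0.525$, chooses $n$ with $p<10^{\frac{1-r}{r}n}-1$, defines $x$ by $x-x^r=p\,10^n$, verifies $x<(p+1)10^n$, and invokes the theorem that $[x-x^{0.525},x]$ contains a prime for all $x$ beyond an absolute threshold $x_0$. You instead observe that the target interval has fixed multiplicative width $1+1/m$ and cite the Prime Number Theorem consequence that $(x,(1+\varepsilon)x)$ contains a prime once $x>X_0(\varepsilon)$. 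Your route is more elementary (PNT rather than deep short-interval results) and cleaner, avoiding the paper's change of variables and the monotonicity argument for $x-x^r$. What the paper's heavier input buys is uniformity: the BHP threshold $x_0$ is absolute, so the number of appended digits admits an explicit bound of roughly $\frac{r}{1-r}\log_{10}p_{i-1}+O(1)$ at each step, whereas in your argument the admissible $n$ is governed by $X_0(1/p_{i-1})$, which grows as the walk proceeds and is enormous (though effective forms of PNT do make it explicit). For the qualitative statement being proved, both inputs suffice. One tiny point to make explicit in your write-up: the PNT count gives a prime in the half-open interval $(x,(1+\varepsilon)x]$, but the right endpoint $10^n(m+1)$ is divisible by $10$ and hence not prime, so the prime indeed lies in the open interval, mirroring the observation you already made for the left endpoint.
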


\begin{proof} We can restate our goal as follows: given a  fixed prime $p$, we must show that there exists an $n$ such that there is a prime in the interval $[p10^n, (p+1)10^n)$. To do so, we note that for a given $p$ and any fixed $r \in [0, 1]$, there exists an $n$ such that 
\begin{equation}\label{eqdefrandn}
p\ < \ 10^{\frac{1-r}{r}n}-1.
\end{equation} Moreover, given such an $n$, it is then possible to find $x > 0$ such that 
\begin{equation}\label{eqdefxminusxr}
p10^n \ = \ x-x^{r}.
\end{equation} Then, using first \eqref{eqdefrandn} and then \eqref{eqdefxminusxr}, we have that 
\begin{eqnarray*}
    p10^n &<& 10^{\frac{n}{r}} - 10^n \\
    x-x^{r} &<& 10^{\frac{n}{r}} - 10^n.
\end{eqnarray*}
The second inequality implies that $x^{r}<10^n$, for when $x^{r}=10^n$, then $x-x^{r} = 10^{\frac{n}{r}} - 10^n$, and moreover, $x-x^{r}$ is strictly increasing (once it is positive).

Given that $x^{r}<10^n$, then $x-x^{r}>x-10^n$. This means that $p10^n>x-10^n$, and so 
\begin{equation}\label{eqboundonx}
    x \ < \ (p+1)10^n.
\end{equation} All that remains is finding an $r$ such that there is always a prime in the interval $[x-x^r,x]$. Results of this nature are plentiful; most recently, Baker, Harman, and Pintz showed that a value of $r = 0.525$ suffices for $x$ greater than some lower bound $x_0$ \cite{BaHaPi}. Since there exists a prime in the interval $[x-x^{0.525}, x]$ for $x > x_0$, then by our previous definitions there must be a prime contained in the interval $[p10^n,(p+1)10^n)$. Note that in order to guarantee $x>x_0$, it is necessary to choose an $n$ such that $n>\log_{10}((x_0-x_0^r)/p)$ (and such that \eqref{eqdefrandn} holds as well).

That there is a prime in $[p10^n, (p+1)10^n)$ implies that there exist $n$ and $k$ such that $p10^n+k$ is prime, with $k<10^n$. This gives the next prime in our sequence, which thus goes on infinitely. \end{proof}

Now we define the extended Cunningham Chain, which ultimately serves as a tool for proving that one cannot walk to infinity on primes in base $2$ when appending up to only $2$ digits at a time to the right.

\begin{defi} An \textit{extended Cunningham chain} is the infinite sequence $e_1,e_2,\ldots$, generated by an initial prime $p$ and the relation $e_k=2e_{k-1}+1$ (for $k\ge1$ and $e_0=p$). In other words, we have that 
\begin{eqnarray*}
    e_0&=&p, \\
    e_1 &=& 2p+1, \\ 
    e_2 &=& 4p+3, \\
    &\vdots& \\
    e_i &=& 2^ip+2^i-1, \\
    &\vdots&
\end{eqnarray*}
\end{defi}

We show that extended Cunningham chains, no matter their initial prime $p$, contain a sequence of consecutive composite $e_i$'s of arbitrarily length. To do so, we begin with the following lemma.

\begin{lem}\label{power2}
If $k\ge\lceil{\log_2(p+1)\rceil}+2$, then $2^k-(p+1)$ is not a power of $2$.
\end{lem}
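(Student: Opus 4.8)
The plan is to argue by contradiction: suppose that for some integer $m \ge 0$ we have $2^k - (p+1) = 2^m$, and extract a contradiction from the lower bound on $k$. The whole argument hinges on comparing two estimates for the size of $2^m$ --- a divisibility estimate forcing it to be small, and a size estimate (coming from the hypothesis on $k$) forcing it to be large.

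First I would pin down the range of $m$. Since $p$ is prime we have $p+1 \ge 3 > 0$, so $2^m = 2^k - (p+1) < 2^k$ forces $m < k$. Rewriting the assumed identity as $p+1 = 2^k - 2^m = 2^m(2^{k-m}-1)$ and noting that $k-m \ge 1$ makes $2^{k-m}-1$ a positive \emph{odd} integer, I conclude that $2^m \mid (p+1)$, and in particular $2^m \le p+1$. (Equivalently, one can phrase this via the $2$-adic valuation, reading off $v_2(p+1) = m$, but the elementary factorization is cleaner.)

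Next comes the size estimate, which is where the precise constant in the hypothesis is consumed. From $k \ge \lceil \log_2(p+1)\rceil + 2$ I get $2^k \ge 4\cdot 2^{\lceil \log_2(p+1)\rceil} \ge 4(p+1)$, using $2^{\lceil \log_2(p+1)\rceil} \ge 2^{\log_2(p+1)} = p+1$. Therefore $2^m = 2^k - (p+1) \ge 4(p+1) - (p+1) = 3(p+1) > p+1$, which contradicts the bound $2^m \le p+1$ obtained above. This contradiction shows that $2^k - (p+1)$ cannot be a power of $2$, as claimed.

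I expect the only delicate point to be the translation of the ceiling-logarithm hypothesis into the clean inequality $2^k \ge 4(p+1)$; everything else is bookkeeping with the factorization $p+1 = 2^m(2^{k-m}-1)$. The ``$+2$'' in the hypothesis is exactly what produces the factor of $4$, which in turn leaves the comfortable gap between $3(p+1)$ and $p+1$ that closes the argument. It is worth checking along the way that the degenerate cases are absorbed automatically: $m = 0$ (i.e. $2^k-(p+1)=1$) and any small $m$ are ruled out because the size estimate already forces $2^m \ge 3(p+1) \ge 9$, so no separate case analysis is needed.
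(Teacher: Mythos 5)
Your proof is correct, and it reaches the contradiction by a slightly different route than the paper's. Both arguments assume $2^k-(p+1)=2^m$ and spend the hypothesis $k\ge\lceil\log_2(p+1)\rceil+2$ the same way, to make $2^k$ at least $4(p+1)$; the difference is where the opposing bound comes from. The paper's proof stays entirely inside size comparisons of powers of two: since $2^m<2^k$ forces $2^m\le 2^{k-1}$, it gets $p+1=2^k-2^m\ge 2^{k-1}\ge 2(p+1)>p+1$, a contradiction, with no divisibility input at all. You instead factor $p+1=2^m(2^{k-m}-1)$, observe that the second factor is odd and positive, and conclude $2^m\mid p+1$, hence $2^m\le p+1$, which collides with the size estimate $2^m=2^k-(p+1)\ge 3(p+1)$. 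The two proofs are of comparable length; the paper's is marginally more economical (no factorization needed), while yours isolates the structural fact that any such $2^m$ would have to be exactly the $2$-part of $p+1$, which is what makes your uniform treatment of all $m$, including $m=0$, automatic. Incidentally, you justify $m<k$ explicitly from $p+1>0$, a point the paper uses but leaves implicit when it writes $2^k-2^n\ge 2^k-2^{k-1}$.
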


\begin{proof} Suppose that there exist $k$ and $n$ such that $2^k-(p+1)=2^n$. Then it is the case that $2^k-2^n=p+1$. Moreover, we have that $2^k-2^n\ge2^k-2^{k-1}=2^{k-1}$. We can thus find a solution for $n$ only if $k<\lceil{\log_2(p+1)\rceil}+2$, for if we take $k\ge\lceil{\log_2(p+1)\rceil}+2$, then $2^{k-1}\ge2^{\lceil\log_2(p+1)\rceil+1}>{p+1}$. We thus have that $p+1=2^k-2^n\ge{2^{k-1}}>{p+1}$, which is a contradiction.
\end{proof}

The next result illustrates the power of Cunningham Chains, which in turn is used to prove the main result of this section.

\begin{thm}\label{longcchain}In any extended Cunningham chain $\{e_k\}^{\infty}_{k = 1}$, given any $n\in\mathbb{Z}_{+}$, there exists $i\in\mathbb{Z}_+$ such that $\{e_i, e_{i+1}, \dots, e_{i+n-1}\}$ are composite.
\end{thm}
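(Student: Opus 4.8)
The plan is to exhibit, for each position in the desired run of length $n$, an explicit prime divisor, and to arrange all the divisibility conditions so that they can be satisfied simultaneously. Write $q=p+1$, so that the chain is $e_i=2^i q-1$. To produce $n$ consecutive composite terms $e_i,e_{i+1},\dots,e_{i+n-1}$, I would attach to each offset $j\in\{0,1,\dots,n-1\}$ an odd prime $\ell_j$ that divides $e_{i+j}$ \emph{properly}.

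The key arithmetic observation is that, modulo a fixed odd prime $\ell$, the terms of the chain are periodic in the index. Indeed $e_a-e_b=(2^a-2^b)q$, so if $d=\operatorname{ord}_\ell(2)$ denotes the multiplicative order of $2$ modulo $\ell$, then $a\equiv b\pmod d$ forces $e_a\equiv e_b\pmod\ell$. Consequently, if I first pick, for each $j$, a prime $\ell_j$ dividing the fixed term $e_{j+1}$ (every $e_m$ with $m\ge 1$ is odd, so $\ell_j$ is odd and $\operatorname{ord}_{\ell_j}(2)$ is well defined, which also disposes of the case $p=2$ where only $e_0$ is even), then $\ell_j\mid e_{i+j}$ whenever $i\equiv 1\pmod{d_j}$, with $d_j=\operatorname{ord}_{\ell_j}(2)$.

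The crucial point is how to meet all $n$ congruences at once. A generic covering-system attempt would impose conditions $i\equiv c_j\pmod{d_j}$ with differing residues $c_j$ and would need the moduli $d_j$ pairwise coprime in order to invoke the Chinese Remainder Theorem --- a requirement tied to deep unproven statements about orders of $2$, in the spirit of Artin's conjecture. I avoid this entirely by anchoring every condition at the \emph{same} residue: because each constraint reads $i\equiv 1\pmod{d_j}$, any $i\equiv 1\pmod{D}$ with $D=\operatorname{lcm}(d_0,\dots,d_{n-1})$ works, and such $i$ certainly exist. Taking $i=DM+1$ with $M$ large then guarantees $\ell_j\mid e_{i+j}$ for all $j$ simultaneously.

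It remains to ensure each divisibility is proper, i.e.\ that $e_{i+j}$ is genuinely composite rather than equal to $\ell_j$. Since $\ell_j\le e_{j+1}\le e_n$ is bounded independently of $M$, while $e_{i+j}\ge e_i=2^{DM+1}q-1$ tends to infinity, choosing $M$ large makes $e_{i+j}>\ell_j$, so $\ell_j$ is a proper divisor and $e_{i+j}$ is composite for every $j\in\{0,\dots,n-1\}$. The main obstacle is thus conceptual rather than computational: recognizing that the periodicity of the chain modulo each $\ell_j$ lets one align all the covering conditions to a common residue, which is precisely what removes any need for coprimality of the orders (and, in this argument, any need for Lemma~\ref{power2}).
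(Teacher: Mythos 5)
Your proof is correct, and it takes a genuinely different route from the paper's, though both exploit the same underlying phenomenon: modulo a fixed odd number with respect to which $p+1$ is congruent to a power of $2$, the terms $e_m=2^m(p+1)-1$ vanish along an arithmetic progression of indices. The difference is where the moduli come from and how the congruences are aligned. The paper manufactures its moduli from the auxiliary numbers $2^{k+j}-(p+1)$, which is exactly why it needs Lemma~\ref{power2} (to guarantee these are not powers of $2$ and hence have odd divisors $d_j>1$), and it aligns the divisibilities by making $i$ a multiple of the product $\phi(2^{k}-p-1)\cdots\phi(2^{k+n-1}-p-1)$ and applying Euler's theorem, placing the composite run at $e_{i-(k+n-1)},\dots,e_{i-k}$. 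You instead harvest an odd prime $\ell_j$ from the chain term $e_{j+1}$ itself --- available for free since every $e_m$ with $m\ge1$ is odd and greater than $1$ --- and observe that all $n$ covering conditions collapse to the single congruence $i\equiv 1\pmod{\operatorname{lcm}(d_0,\dots,d_{n-1})}$, so no analogue of Lemma~\ref{power2}, no totients, and (as you stress) no coprimality or CRT input is needed. Your write-up also makes explicit a point the paper glosses over: divisibility by $\ell_j>1$ certifies compositeness only once $e_{i+j}>\ell_j$, which you secure by taking $M$ large, whereas the paper's closing remark about taking $c$ sufficiently large explicitly addresses only positivity of the indices (the size condition does hold there as well, for the same reason). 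What the paper's route buys is a completely explicit formula for a working $i$ as a product of totient values; what yours buys is a shorter, self-contained argument with one fewer lemma and the compositeness step handled cleanly.
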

\begin{proof}
Set $k=\lceil{\log_2(p+1)\rceil}+2$, and let us consider $i=c\cdot\phi(2^k-p-1)\cdot\phi(2^{k+1}-p-1)\cdots\phi(2^{k+n-1}-p-1)$, where $c\in\mathbb{Z}_{+}$ is arbitrary. Moreover, for each of $2^{k+j}-p-1$ (with $0\le{j}\le{n-1}$), take an odd positive divisor $d_j\mid 2^{k+j}-p-1$ that is greater than 1. We can find such $d_j$ because we have chosen $k$ via Lemma~\ref{power2} such that none of $2^{k+j}-p-1$ are powers of $2$. Because $p+1\equiv2^{k+j}$ (mod ${2^{k+j}-p-1}$), it is also the case that $p+1\equiv2^{k+j}$ (mod ${d_j}$). Thus we have that 
\begin{equation}
e_{i-(k+j)}\ =\ 2^{i-(k+j)}(p+1)-1\ \equiv\ 2^{i-(k+j)}2^{k+j}-1\equiv2^i-1\pmod{d_j}. \\
\end{equation}
However, as 2 is coprime with $d_j$, Euler's theorem gives $2^{\phi(d_j)}\equiv1$ (mod ${d_j}$). Moreover, it is the case that $\phi(d_j)\mid\phi(2^{k+j}-p-1)$, since $d_j\mid(2^{k+j}-p-1)$. Hence we have that
\begin{eqnarray}
    e_{i-(k+j)}&\equiv&2^i-1 \ \equiv \ 2^{c\cdot\phi(2^k-p-1)\cdot\phi(2^{k+1}-p-1)\cdots\phi(2^{k+n-1}-p-1)} - 1 \nonumber\\
    &\equiv&(2^{\phi(d_j)})^{K_j}-1 \ \equiv \ 0\pmod{d_j},
\end{eqnarray}
such that $K_j$ is an integer ($K_j=c\cdot[\phi(2^k-p-1)\cdots\phi(2^{k+n-1}-p-1)]/\phi(d_j)$). \\

We have thus shown that $\{e_{i-k}, e_{i-k-1}, \ldots, e_{i-(k+n-1)}\}$ are composite. Notice that with $c$ sufficiently large, $i$ can be made greater than $k+n-1$,
allowing us to find a sub-sequence of $n$ composite elements for any $n$. Renaming the indices gives the desired result.
\end{proof}
Using this result we can prove that appending $2$ digits at a time to the right is insufficient to walk to infinity on primes in base $2$.

\begin{thm} \label{prime-base-2}
It is impossible to walk to infinity on primes in base $2$ by appending no more than $2$ digits at a time to the right. 
\end{thm}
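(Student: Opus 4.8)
\emph{Proof proposal.} The plan is to argue by contradiction: suppose an infinite walk $p_0,p_1,p_2,\dots$ exists, where each $p_{i+1}$ is obtained from $p_i$ by appending one or two binary digits on the right. Since $p_i\to\infty$, after discarding a finite initial segment I may assume every $p_i>3$. The first step is to pin down the legal moves. Appending a single bit sends $p$ to $2p$ or $2p+1$, and since $2p$ is even only $p\mapsto 2p+1$ can remain prime; appending two bits sends $p$ to one of $4p,4p+1,4p+2,4p+3$, of which only the odd values $4p+1$ and $4p+3$ survive. Writing $e_j(p)=2^j(p+1)-1$ for the extended Cunningham chain started at $p$, the two maps
\[
 p\ \longmapsto\ 2p+1=e_1(p), \qquad p\ \longmapsto\ 4p+3=e_2(p)
\]
are precisely ``advance one step'' and ``advance two steps'' along that chain, while the remaining legal move $p\mapsto 4p+1$ is the single exception that does \emph{not} lie on the chain.

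The crux is to show that this exceptional move can be used essentially only once, so that the walk is eventually trapped inside one extended Cunningham chain. For this I would track residues modulo $3$. For a prime $p>3$ we have $p\equiv 1$ or $2\pmod 3$, and one computes $2p+1\equiv 0\pmod 3$ iff $p\equiv 1$, that $4p+1\equiv p+1\pmod 3$ (so $\equiv 0$ iff $p\equiv 2$), and that $4p+3\equiv p\pmod 3$. Hence from a prime $\equiv 2\pmod 3$ only the two chain-moves survive and both again produce a value $\equiv 2\pmod 3$, whereas from a prime $\equiv 1\pmod 3$ the surviving moves are $4p+1$ (landing at $\equiv 2$) and $4p+3$ (staying at $\equiv 1$). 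Thus the class $2\pmod 3$ is absorbing, and the only move that ever changes the residue is the exceptional $p\mapsto 4p+1$, which can therefore be applied at most once along the whole walk. After that single possible application—or from the start, if it never occurs—every subsequent step is a chain-move $e_1$ or $e_2$. Consequently there is an index $i_0$ such that the tail $p_{i_0},p_{i_0+1},\dots$ consists entirely of terms of the single extended Cunningham chain of $p_{i_0}$, appearing at chain-indices $0=j_0<j_1<\cdots$ whose consecutive gaps $j_{t+1}-j_t$ are each $1$ or $2$.

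To finish I would invoke Theorem~\ref{longcchain}. Applying it with $n=2$ (and taking the free parameter $c$ in its proof large, so the composite block sits at an arbitrarily large index) produces, in the chain of $p_{i_0}$, two consecutive composite terms $e_m,e_{m+1}$ with $m\ge 1$. Because the walk visits only primes, it visits neither index $m$ nor $m+1$. Yet the visited indices are unbounded, so the largest visited index $j_t<m$ exists (index $0$ qualifies), and its successor satisfies $m\le j_{t+1}\le j_t+2\le m+1$, forcing $j_{t+1}\in\{m,m+1\}$—a composite index, contradicting that $p_{i_0+(t+1)}$ is prime. Hence no infinite walk exists. I expect the genuine obstacle to be exactly the exceptional move $p\mapsto 4p+1$, which breaks the clean Cunningham-chain picture; the modular-$3$ bookkeeping is the device that neutralizes it by exhibiting it as a one-time transition into an absorbing residue class, after which Theorem~\ref{longcchain} and the ``gap at most $2$ cannot cross a double-composite block'' observation close the argument.
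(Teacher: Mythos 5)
Your proof is correct and follows essentially the same route as the paper's: parity restricts the legal moves to $2p+1$, $4p+1$, $4p+3$; the mod-$3$ bookkeeping shows the exceptional move $p\mapsto 4p+1$ can occur at most once, after which the walk is confined to a single extended Cunningham chain with step sizes $1$ or $2$; and Theorem~\ref{longcchain} with $n=2$ supplies two consecutive composite chain entries that such a walk cannot jump over. Your closing index argument (taking the largest visited index below the composite block) is in fact slightly more careful than the paper's, which leaves that final ``cannot cross a double-composite block'' step implicit.
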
 

\begin{proof}
Since we are appending at most $2$ digits in base $2$, the allowed blocks are $0_2$, $1_2$, $00_2$, $01_2$, $10_2$, and $11_2$. Avoiding even numbers, we are left with $1_2$, $01_2$ and $11_2$.

Appending $01_2$ to a prime $p$ gives $4p+1$. If $p \equiv 2$ (mod $3$), then $4p+1$ is divisible by $3$ and is thus not prime. Moreover, given $p\equiv2$ (mod $3$), then $2p+1$ and $4p+3$ are equivalent to $2$ (mod $3$) as well. Thus if $p \equiv 2$ (mod $3$), we can walk to infinity from that point onward only by appending $1_2$ or $11_2$.

If $p \equiv 1$ (mod $3$), then $4p+1 \equiv 2$ (mod $3$). This brings us to the above case, now applied to $4p+1$. No matter the value of our initial prime $p$, we can therefore append $01_2$ at most once in our walk to infinity. It is thus sufficient to consider the point at which we append only $1_2$ or $11_2$ to eternity. We can then apply Theorem~\ref{longcchain} with $n=2$. Namely, continuously appending $1$ to a prime in  base $2$ creates a generalized Cunningham chain, which we know contains prime gaps of size $2$; hence there will be some point in the prime sequence for which $2p+1$ and $4p+3$ are both composite, and we can walk no further.
\end{proof}

Applying the ideas of the above results allows us to make observations in bases $3, 4, 5$, and $6$; the analogous results and their proofs can be found in Section \ref{subsec:impossibility}. The Section \ref{subsec: Mersenne} also discusses the inability to create walks on the (very sparse) Mersenne primes.

In conclusion, the use of stochastic models suggests that there is no infinite prime walk given by adding one digit at a time to the right, but that it is likely to have one if we can add a digit anywhere.

%%%%%%%%%%
%%%%%%%%%%
%%%%%%%%%%

\section{Modeling square-free walks}\label{sec:sqfreeModel}

\subsection{Model}\label{sec:rmlesfw}
We now turn our attention to square-free walks whose density is positive, in contrast to sequences that have density zero like primes. Due to this difference, we expect that we can construct a square-free walk to infinity. Before discussing the details, we precisely define the square-free integers.

\begin{defi}[Square-Free Integer]\label{def:SqFree}
A \textit{square-free integer} is an integer that is not divisible by any perfect square other than $1$.
\end{defi}

To verify our conjecture, we must be judicious in how we append digits. For example, $231546210170694222$ is a square-free number such that successively removing the rightmost digit always yields a square-free number, but appending any digit to the right yields a non-square-free one. If $Q(x)$ denotes the number of square-free positive integers less than or equal to $x$, it is well-known \cite{MT-B} that \be Q(x)\ \approx \ x \underset{p\text{ prime}}{\prod} \left(1 - \frac{1}{p^2}\right)\ =\ x \underset{p\text{ prime}}{\prod} \frac{1}{1 + \frac{1}{p^2} + \frac{1}{p^4} + \cdots}\ =\ \frac{x}{\zeta(2)}\ =\ \frac{6x}{\pi^2}. \label{eq:density}\ee
In this setting, each possible appended number is independently declared to be a square-free number with probability $p = 6/\pi^2$. We now present our first model for estimating the length of square-free walks. 

\begin{algorithm}[Blind Unlimited Square-Free Walk]\label{sqFreeWalkDesignation}
 Choose one digit uniformly at random from the set $\{0, 1, \dots, 9\}$ and append it: if the obtained number is not square-free, stop and record the length; otherwise, continue the process.
\end{algorithm}

 We present the experimental expected lengths under this model, starting with different number of digits, in Table \ref{table:square-freecomparison}. Akin to Table \ref{table:primecomparison2mod3} we opt to provide data based on the exact number of digits in the starting number, rather than the maximal possible number.
\begin{table}[!htbp]
\centering
 \begin{tabular}{||c c c c c c c c||}
 \hline
 Start has exactly $r$ digits & $0$ & $1$ & $2$ & $3$ & $4$ & $5$ & $ 6$\\
 \hline\hline
 Blind unlimited square-free walk & 1.68 & 2.79 & 2.76 & 2.72 & 2.71 & 2.71 & 2.71 \\
 \hline
\end{tabular}
\caption{\label{table:square-freecomparison} Experimental expected lengths of the square-free walks in base $10$.}
\end{table}

To gain more understanding of the behavior of square-free walks, we first find the probability that the square-free walk is of length exactly $k$, where the number of starting digits is fixed.

\begin{lem}\label{XLem}
    Let $X_m$ denote the length of our random square-free walk, starting with exactly $m$ digits. Then the theoretical expected value of $X_m$ is
    \begin{eqnarray}\label{XLemEq}
        \begin{aligned}
            \mathbb{E}[X_m] \ &\approx \ \begin{cases}1.55&{m=0}\\2.55&{m\ge1}.\end{cases}
        \end{aligned}
    \end{eqnarray}
\end{lem}

\begin{proof}
This proof is standard via the technique of differentiating identities, but we present it anyway for the sake of completeness.
Note that, for all $k\ge 0$ and positive~$m$, $X_m=k$ corresponds to $k-1$ successful appendings followed by an unsuccessful one. However, $X_0=k$ corresponds to $k$ successful ones and 1 unsuccessful one. Thus,
we have that\be\Pr[X_0 = k]\ =\ p^{k}(1-p)\ =\ \Pr[X_m = k+1],\ee
and so $X$ is a geometric random variable. Using the fact that \be \sum_{k=0}^{\infty} p^k\ =\ \frac{1}{1-p} \ee and differentiating term by term (which is permissible due to absolute convergence), we obtain \be\sum_{k=0}^{\infty} kp^{k-1}\ =\ \frac{1}{(1-p)^2}\ \  \Rightarrow\ \ \sum_{k=0}^{\infty} kp^{k}(1-p) \ =\ \frac{p}{1-p}.\ee
Therefore, we find 
\begin{align}\mathbb{E}[X_0] \ &= \ \sum_{k=0}^{\infty}k\Pr[X_0 = k] \ = \ \sum_{k=0}^{\infty} kp^{k}(1-p)\ =\ \frac{p}{1-p}\ =\ \frac{6}{\pi^2 - 6} \ \approx\ 1.55. \label{eq:expected-sq}\\
\mathbb{E}[X_m]\ &= \ 1+\sum_{k=0}^\infty k\Pr[X_m-1=k]\ =\ 1+\sum_{k=0}^{\infty} kp^{k}(1-p)\ \approx\ 2.55.~~~(m\ge1)\end{align}
\end{proof}

Let us now compute the probability that the longest walk starting with a given square-free number is at most $k$. Let $P_{i}$ be the probability that the longest square-free walk has length at most $i$. In particular, $P_{1}$ is the probability that the longest square-free walk has a length of exactly one, i.e., the walk is the starting point. In other words, appending any digit yields a non-square-free number, so \be\label{eq:P1square} P_{1} \ = \ (1-p)^{10} \ = \ \frac{(\pi^2 - 6)^{10}}{\pi^{20}} \ \approx \ 8.58357\times10^{-5}.\ee 

Now, consider the probability that the longest square-free walk has length at most $2$; 
indeed, there are $10$ possible cases where exactly $i$ digits work in the first appending, i.e., $0\le i \le 9$. Then, by using \eqref{eq:P1square} and the Binomial Theorem we have that
\begin{eqnarray} \label{eq:P2square}
P_{2} &=& P_{1} + {10 \choose 1}(1-p)^9pP_{1} + {10 \choose 2}(1-p)^8(pP_{1})^2 + \dots + {10 \choose 10}p^{10}P_{1}^{10} \nonumber\\
&=& {10 \choose 0}(1-p)^{10} + {10 \choose 1}(1-p)^9pP_{1} +  \dots + {10 \choose 10}p^{10}P_{1}^{10} \nonumber\\
&=& (1-p+pP_1)^{10} \ \approx \ 8.59501 \times 10^{-5} .\end{eqnarray}

To compute $P_3$, let $i$ denote the number of digits that we can append in the first step while remaining square-free. Then, there are $10i$ possible numbers after the second appendage. Like $P_2$, we consider cases when there are exactly $0 \le k \le 10i$ numbers work. Note that, when $i = 0$ or $j = 0$, we have a walk of length $1, 2$ respectively, so such cases are included in $P_2$. Therefore, by \eqref{eq:P1square} and \eqref{eq:P2square}, we have that 
\begin{eqnarray}\label{eq:P3square} P_{3}
&=& P_{2} + \sum_{i = 1}^{10} {10 \choose i}p^{i}(1-p)^{10 - i} \left( \sum_{k=1}^{10i} {10i \choose k}(1-p)^{10i - k}(pP_{1})^{k}\right) \nonumber\\
&=& P_{2} + \sum_{i=1}^{10}p^i(1-p)^{10-i} \left(((1-p)+pP_1)^{10i} - (1-p)^{10i}\right)\nonumber\\ 
&=& P_{2} + \sum_{i=1}^{10}p^i(1-p)^{10-i} \left(P_2^{i} - P_1^{i}\right)\nonumber\\ 
&=& P_{2} + \sum_{i=1}^{10}p^i(1-p)^{10-i}P_2^{i} -  \sum_{i=1}^{10}p^i(1-p)^{10-i}P_1^{i}\nonumber\\ 
&=& P_2+\left((1-p+pP_2)^{10} -(1-p)^{10}) - ((1-p+pP_1)^{10}-(1-p)^{10}\right)\nonumber\\ 
&=& (1-p+pP_2)^{10} \ \approx \ 8.59502\times10^{-5}. \end{eqnarray}

The next step is to compute $P_k$ for an arbitrary $k \in \mathbb{N}^+$, which can be done by induction. Suppose that, for $2 \le m \le k$, we have $P_k = (1-p+pP_{k-1})^{10}$. Similar to the idea used to compute $P_2$ and $P_3$, we have that 
\begin{eqnarray}\label{Eq: Pk+1Steps}
&& P_{k+1} \nonumber\\
&=& P_k + \sum_{a_1 = 1}^{10}{10 \choose a_1}p^{a_1}(1-p)^{10-a_1}\sum_{a_2=1}^{10a_1}{10a_1 \choose a_2}p^{a_2}(1-p)^{10a_1-a_2}\cdots \nonumber\\
&&\sum_{a_{k-1}=1}^{10a_{k-2}}{10a_{k-2} \choose a_{k-1}}p^{a_{k-1}}(1-p)^{10a_{k-2}-a_{k-1}}\sum_{a_k=1}^{10a_{k-1}}{10a_{k-1} \choose a_k}p^{a_k}(1-p)^{10a_{k-1}-a_k}p_1^{a_k}\nonumber \\
&=& P_k + \sum_{a_1 = 1}^{10}{10 \choose a_1}p^{a_1}(1-p)^{10-a_1}\sum_{a_2=1}^{10a_1}{10a_1 \choose a_2}p^{a_2}(1-p)^{10a_1-a_2}\cdots\nonumber\\ 
&& \sum_{a_{k-1}}^{10a_{k-2}}{10a_{k-2} \choose a_{k-1}}p^{a_{k-1}}(1-p)^{10a_{k-2}-a_{k-1}}\big((1-p+pP_1)^{10a_{k-1}} - (1-p)^{10a_{k-1}}\big) \nonumber\\
&=& P_k + \sum_{a_1 = 1}^{10}{10 \choose a_1}p^{a_1}(1-p)^{10-a_1}\sum_{a_2=1}^{10a_1}{10a_1 \choose a_2}p^{a_2}(1-p)^{10a_1-a_2}\cdots \nonumber \\
&&\sum_{a_{k-1}}^{10a_{k-2}}{10a_{k-2} \choose a_{k-1}}p^{a_{k-1}}(1-p)^{10a_{k-2}-a_{k-1}}\big(P_2^{a_{k-1}} - P_1^{a_{k-1}}\big). 
\end{eqnarray}
By repeating the same procedure as in calculating $P_3$, we are able to reduce the  expression \eqref{Eq: Pk+1Steps} to
\begin{eqnarray}\label{eq:sq-free_prob}
P_{k+1} &=& P_k + \sum_{a_1 = 1}^{10}{10 \choose a_1}p^{a_1}(1-p)^{10-a_1}\big(P_k^{a_1}-P_{k-1}^{a_1}\big)\nonumber\\
&=& P_k + (1-p+pP_k)^{10}- (1-p+pP_{k-1})^{10} \nonumber\\
&=& (1-p+pP_k)^{10}, \end{eqnarray}which holds true for any positive integer $k \ge 1$.

We now prove that $P_{k}$ approaches some constant as $k \to \infty$. Using $\eqref{eq:sq-free_prob}$, we have that $$P_{k} \ = \ (1-p+pP_{k-1})^{10} \ \ge \ 0.$$
Furthermore, if $P_{k - 1} \ \le \ 1/2$, then $$ P_{k} \ \le \ \left(1 - p + \frac{p}{2}\right)^{10} \ = \ \left(1 - \frac{3}{\pi^2}\right)^{10} \ < \ 0.7^{10} \ < \ \frac{1}{2}.$$ 
Then, by induction, when the base case is $P_1 \approx 8.5835\times10^{-5}$ (from \eqref{eq:P1square}), we have that $P_{k} \le 1/2$ for any $k \ge 1$. Lastly, note that $P_{2} \ > \ P_{1}$, and using strong induction and $\eqref{eq:sq-free_prob}$, we get that 
$$P_{k+1} \ = \ (1 - p + pP_{k})^{10} \ \ge \ (1 - p + pP_{k - 1})^{10} \ = \ P_{k}.$$
In other words, $\big(P_{k}\big)_{k \ge 1}$ is an increasing sequence. By the Monotone Convergence Theorem \cite[Theorem 2.4.2]{Abb}, we get that there exists $ l \in [0, 0.5]$ such that 
\begin{equation}\lim_{k \to \infty} P_{k} \ = \ l.
\end{equation}
Sending $k \to \infty$ in $\eqref{eq:sq-free_prob}$, we get that
$$l \ = \ (1 - p + pl)^{10}.$$
Using Mathematica, we see that the only rational root in the range $[0,0.5]$ is 
\be \label{eq:limitval} l \ \approx \ 8.59502\times10^{-5}. \ee
The limit $l$ stands for the probability that, starting at some fixed number $x$, there is a bounded limit $N$, which can be very large, such that no square-free walk can exceed length~$N$. That is, if the limit of $P_k$ is as small as $8.59502\times10^{-5}$, it implies the following theorem.  

\begin{thm}\label{thm:highprobSF} Given we append one digit at a time, the theoretical probability that there is an infinite random square-free walk from any starting point is as least $1-l \approx 0.99991$. In other words, there is such a walk from almost any starting point.\end{thm}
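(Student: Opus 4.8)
The plan is to turn the purely analytic limit $l$ from \eqref{eq:limitval} into a statement about an event and then pass to the complement. First I would fix a square-free starting number and let $L$ be the length of the longest square-free walk issuing from it, regarded as a random variable on the product space underlying the model, where each appended number is independently declared square-free with probability $p$; thus $L$ takes values in $\{1,2,\dots\}\cup\{\infty\}$. By the definition of $P_k$ used to derive \eqref{eq:sq-free_prob}, we have exactly $P_k=\Pr[L\le k]$, so $P_k$ is the distribution function of $L$.

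The structural point I would emphasize is that the admissible appendings form a rooted tree in which every vertex has exactly ten children (one per digit), hence the tree is finitely branching. K\"onig's lemma then shows that an infinite square-free walk exists if and only if square-free walks of every finite length exist; in symbols, $\{\exists\ \text{infinite walk}\}=\bigcap_{k\ge1}\{L>k\}=\{L=\infty\}$. I expect this identification to be the only genuinely delicate step: without finite branching one could imagine arbitrarily long finite walks coexisting with no single infinite walk, and ruling this out is precisely what K\"onig's lemma provides.

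Given this, the conclusion is a one-line limiting argument. The events $\{L\le k\}$ increase in $k$ with union $\{L<\infty\}$, so continuity of the probability measure from below yields $\Pr[L<\infty]=\lim_{k\to\infty}P_k$. The earlier analysis already establishes that $(P_k)_{k\ge1}$ is increasing, bounded above by $1/2$, and converges to the fixed point $l$ of $x=(1-p+px)^{10}$ lying in $[0,1/2]$; combining these gives $\Pr[L<\infty]=l$, and therefore
\[
\Pr[\exists\ \text{infinite walk}]\ =\ \Pr[L=\infty]\ =\ 1-l.
\]
Substituting $l\approx 8.5950\times10^{-5}$ from \eqref{eq:limitval} gives the value $1-l\approx 0.99991$ claimed in the statement, the rounded constant $0.99991$ being a valid lower bound for $1-l$. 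Finally, because the recurrence \eqref{eq:sq-free_prob} and hence its limit $l$ never reference the particular starting value, the same probability $1-l$ is obtained no matter where the walk begins, which is the content of the ``from any starting point'' clause.
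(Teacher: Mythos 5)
Your proposal is correct and follows essentially the same route as the paper: identify $l=\lim_{k\to\infty}P_k$ with the probability that the longest walk from the given start is finite, and pass to the complement, using the monotonicity and fixed-point analysis already established for the recurrence \eqref{eq:sq-free_prob}. You are in fact more careful than the paper, whose entire justification is the informal remark preceding the theorem: the K\"onig's-lemma step you isolate --- upgrading ``walks of every finite length exist'' to ``an infinite walk exists,'' legitimate because the tree of appendings is finitely branching --- is precisely what the paper leaves implicit, and without it one would only obtain the upper bound $\Pr[\exists\ \text{infinite walk}]\le 1-l$ rather than the claimed lower bound.
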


\begin{rek}
Although Theorem \ref{thm:highprobSF} suggests that stochastically, the probability of walking to infinity on square-free numbers is high, there exist square-free numbers that can't be extended. For example, $231546210170694222$ is a square-free number, such that if we append any digit to the right we get a non-square-free number. In particular, if we delete any number of digits to the right we get a square-free number as well, so this proves we can reach a stopping point when starting with $2$ and append digits to the right randomly. Furthermore, our example implies that the walk is not constructive, i.e., if we start with a square-free walk and append a digit at random that yields a new square-free number, we may reach a point where we could not move forward.
\end{rek}

%%%%%%%%%%%%%%%%%%%%%

\subsection{Quantitative Results}\label{results}

From \S \ref{sec:rmlesfw}, according to the blind unlimited model of square-free walks, the expected length of square-free walks is $6/{(\pi^2-6)}$ in \textit{any} base. In reality, however, this is not always the case.

Dropping the probabilistic assumption about the square-free numbers, we assume that a random square-free walk starts with the empty string, then randomly selected digits are appended to the right, and the process stops when the number obtained is not square-free. We let $E_b$ denote the theoretical expected length of such a walk in base $b$, and $\sfr$ the set of square-free numbers. We supplement this notation with another definition.

\begin{defi}[Right Truncatable Square Free]\label{def:RTSF} We denote by $\rtsf_b$ the set of square-free numbers base $b$ such that if we successively remove the rightmost digit, each resulting number is still square-free. Equivalently, let $b^{k-1} \le x < b^{k}$. Then, $x \in \rtsf_b$ if and only if for all $\ell \in \{0, 1, \dots, k-1\}$ we have $\lfloor x/b^\ell\rfloor$ is square-free. \end{defi}
We also define \be L_{b,k}\ := \ \left|\rtsf_b\cap [b^{k-1}, b^k)\right|.\ee
This quantity counts the number of right-truncatable square-free numbers with exactly $k$ digits in base $b$.

\begin{lem}\label{EbLemma} We have \be E_b \ = \ \su_{k=1}^\infty \frac{L_{b,k}}{b^k - b^{k - 1}}.\ee \end{lem}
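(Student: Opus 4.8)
The plan is to compute the expected length $E_b$ of a random square-free walk starting from the empty string by writing it as a sum of tail probabilities, exactly in the spirit of the identity \eqref{eq:ABexpansionprimes} used for prime walks. Let $X$ denote the (random) length of the walk, i.e. the number of digits successfully appended before we first produce a non-square-free number. The standard tail-sum formula gives $E_b = \mathbb{E}[X] = \sum_{k=1}^\infty \mathbb{P}[X \ge k]$, so the whole problem reduces to identifying $\mathbb{P}[X \ge k]$ with $L_{b,k}/b^k$.

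First I would unwind the definition of the walk. We build a string digit by digit, each new digit chosen uniformly from the $b$ possibilities, and we continue precisely as long as the number formed so far is square-free. Hence the event $\{X \ge k\}$ is the event that the first $k$ digits we generate form a number whose every right-truncation (including itself) is square-free; that is, all of $\lfloor x/b^\ell\rfloor$ for $\ell \in \{0,\dots,k-1\}$ are square-free, which by Definition~\ref{def:RTSF} is exactly the condition that the $k$-digit number $x$ lies in $\rtsf_b$. (One must be slightly careful at the leading digit so that the string genuinely has $k$ digits, i.e. a nonzero leading digit, but this is built into the interval $[b^{k-1}, b^k)$ that $L_{b,k}$ counts.)

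Next I would count. Since each of the first $k$ digits is chosen independently and uniformly, every specific $k$-digit string is produced with probability $b^{-k}$, and there are $b^{k}$ equally likely $k$-digit strings in total (allowing a leading zero would be a minor bookkeeping point; the cleaner statement is that a uniformly random element of $[b^{k-1},b^k)$ lands in $\rtsf_b$ with probability $L_{b,k}/b^{k}$ once we normalize by the full block $b^k$). The number of those strings for which the walk survives all $k$ steps is exactly $|\rtsf_b \cap [b^{k-1}, b^k)| = L_{b,k}$. Therefore $\mathbb{P}[X \ge k] = L_{b,k}/b^{k}$, and substituting into the tail-sum formula yields
\be
E_b \ = \ \su_{k=1}^\infty \mathbb{P}[X \ge k] \ = \ \su_{k=1}^\infty \frac{L_{b,k}}{b^k},
\ee
which is the claimed identity.

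The main obstacle, and the only genuinely subtle point, is justifying that $\{X \ge k\}$ coincides with the $\rtsf_b$ condition rather than merely the weaker condition that the final $k$-digit number is square-free. The walk stops at the \emph{first} moment it becomes non-square-free, so surviving to length $k$ requires that \emph{every} intermediate prefix was square-free, which is precisely the nested truncation requirement encoded in $\rtsf_b$; conversely, membership in $\rtsf_b$ guarantees no prefix ever failed, so the walk indeed reaches length $k$. Making this equivalence explicit, together with the normalization convention for the leading digit (equivalently, the block count $b^k$ versus $(b-1)b^{k-1}$), is the crux; the remaining counting and the tail-sum manipulation are routine.
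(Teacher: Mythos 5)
Your proposal is correct and takes essentially the same route as the paper: the paper's proof is a one-line appeal to the tail-sum identity \eqref{eq:ABexpansionprimes} established for prime walks, which is precisely the decomposition $E_b = \sum_{k\ge1}\mathbb{P}[X\ge k]$ you use. Your write-up simply makes explicit the two details the paper leaves implicit — that surviving $k$ steps is equivalent to membership in $\rtsf_b$, and that the leading-digit normalization gives $\mathbb{P}[X\ge k] = L_{b,k}/b^k$ — so it is a faithful, fleshed-out version of the paper's argument.
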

\begin{proof} The proof follows from the same reasoning used to prove  \eqref{eq:ABexpansionprimes}. \end{proof}

\begin{thm}\label{E2estimate} We have $E_2$ satisfies the following bounds: \be 2.31435013 \ < \ \frac{636163720502}{2^{38}}\ \le\ E_2\ \le\ \frac{636163930777}{2^{38}}\ < \ 2.31435090.\ee\end{thm}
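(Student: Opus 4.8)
The plan is to compute $E_2$ via the formula from the preceding lemma, namely $E_2 = \sum_{k=1}^\infty L_{2,k}/2^k$, by splitting the sum into an explicitly computed finite head and a rigorously bounded tail. The quantity $L_{2,k}$ counts right-truncatable square-free numbers with exactly $k$ binary digits, so the leading digit is $1$ and $2^{k-1}\le x<2^k$. The key observation is that the set $\rtsf_2$ has a tree structure: $x\in\rtsf_2$ with $k$ digits extends to a $(k+1)$-digit member $2x$ or $2x+1$ precisely when that number is square-free, so the $L_{2,k}$ can in principle be generated level by level. The denominator $2^{38}$ in the statement signals that the authors computed $L_{2,k}$ exactly for $k$ up to $38$, giving $\sum_{k=1}^{38} L_{2,k}/2^k$ as the common lower part of both bounds, and the gap $636163930777-636163720502=210275$ over $2^{38}$ is the room reserved for the tail $\sum_{k=39}^\infty L_{2,k}/2^k$.

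The first step is therefore to establish the exact values $L_{2,k}$ for $1\le k\le 38$ by a direct (computer-assisted) enumeration of the $\rtsf_2$ tree, each node being tested for square-freeness. This gives the finite partial sum $S_{38}:=\sum_{k=1}^{38}L_{2,k}/2^k = 636163720502/2^{38}$, which is exactly the claimed lower bound, using that every omitted term is nonnegative. For the upper bound I would bound the tail. Here I would use the density estimate \eqref{eq:density}: among the $2^{k-1}$ integers in $[2^{k-1},2^k)$ only a proportion tending to $6/\pi^2$ are square-free, so certainly $L_{2,k}\le |\text{square-free numbers in }[2^{k-1},2^k)|$, and in fact the truncatability constraint makes $L_{2,k}$ much smaller. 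The crude but rigorous bound $L_{2,k}\le 2^{k-1}$ already gives a geometric tail $\sum_{k\ge 39}L_{2,k}/2^k\le \sum_{k\ge 39}1/2 \cdot$ which diverges, so a cruder bound is insufficient; instead I would track the branching ratio of the tree. The cleanest rigorous approach is to note that $L_{2,k+1}\le 2L_{2,k}$ always, but more usefully that $L_{2,k+1}/L_{2,k}$ stabilizes below a constant $\rho<2$, and to prove a uniform bound $L_{2,k}\le C\rho^{k}$ valid for $k\ge 38$ with $\rho/2<1$, yielding a convergent geometric tail.

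The main obstacle will be making the tail bound \emph{rigorous} rather than heuristic: the density $6/\pi^2$ is asymptotic, and one needs an effective inequality controlling how many square-free descendants a node can have at each level. I would handle this by proving a one-step bound of the form $L_{2,k+1}\le \lambda L_{2,k}$ for an explicit $\lambda<2$ valid for all $k\ge k_0$ (for instance by observing that on average fewer than two of the two children $2x,2x+1$ survive, and bounding the fraction that do using that a positive proportion of candidates are divisible by a fixed small square such as $4$ or $9$). Combined with the exact value of $L_{2,38}$, this gives
\begin{equation}
\sum_{k=39}^\infty \frac{L_{2,k}}{2^k}\ \le\ \frac{L_{2,38}}{2^{38}}\sum_{j=1}^\infty \left(\frac{\lambda}{2}\right)^{j}\ =\ \frac{L_{2,38}}{2^{38}}\cdot\frac{\lambda/2}{1-\lambda/2},
\end{equation}
and choosing the data so that this tail is at most $210275/2^{38}$ closes the gap to the stated upper bound $636163930777/2^{38}$. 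Verifying numerically that the computed $L_{2,38}$ together with the admissible $\lambda$ keeps the tail within that reserved margin, and that both rational bounds indeed sandwich the decimal estimates $2.31435013<E_2<2.31435090$, completes the argument.
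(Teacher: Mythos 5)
Your overall decomposition (exact head sum plus nonnegative tail for the lower bound, geometric tail estimate for the upper bound) is the same skeleton as the paper's proof, but the critical step --- the tail bound --- has a genuine gap as you have set it up. You propose to prove a one-step contraction $L_{2,k+1}\le \lambda L_{2,k}$ with an explicit $\lambda<2$, justified ``by observing that on average fewer than two of the two children survive'' because a positive proportion of candidates are divisible by a fixed square such as $4$ or $9$. This requires a \emph{lower} bound on the proportion of level-$k$ members of $\rtsf_2$ lying in the relevant residue classes (e.g., the even ones, whose child $2x$ is killed by divisibility by $4$), and no such lower bound is available: the asymptotic density $6/\pi^2$ describes all integers, not the members of the $\rtsf_2$ tree, and nothing in your argument rules out a level consisting (almost) entirely of odd members, at which point the one-step branching bound degenerates back to $2$. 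Upper-bound counting alone can never yield a one-step ratio below $2$, so the inequality your geometric series rests on is unproved.

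The paper circumvents exactly this difficulty by splitting each level into odd and even members, $L^O_{2,k}$ and $L^E_{2,k}$, and using only \emph{upper} bounds that follow from the arithmetic: appending $1$ gives $L^O_{2,k+1}\le L^O_{2,k}+L^E_{2,k}$, while appending $0$ to an even number produces a multiple of $4$, so $L^E_{2,k+1}\le L^O_{2,k}$. Although neither inequality beats the trivial factor $2$ in one step, together they do over two steps:
\begin{equation}
L_{2,k+2}\ \le\ 2L^O_{2,k+1}+L^E_{2,k+1}\ \le\ 3L^O_{2,k}+2L^E_{2,k}\ \le\ 3L_{2,k},
\end{equation}
i.e., a contraction factor $3/4$ per two steps for the weighted sums $L_{2,k}/2^k$ (this is the two-variable recursion whose matrix is the Fibonacci matrix, with spectral radius below $2$). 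Iterating this on the tail $S_2=\sum_{i\ge 41}L_{2,i}/2^i$ gives $S_2\le 5L_{2,40}/2^{40}=210275/2^{38}$, which is precisely the gap between the two fractions in the statement; no lower bound on any subpopulation is ever needed. Also note a small bookkeeping point: the paper computes the head through $k=40$ (not $38$); the lower bound $636163720502/2^{38}$ equals $\sum_{i=1}^{40}L_{2,i}/2^i=318081860251/2^{37}$ after clearing denominators. To repair your write-up, replace the scalar contraction claim by the odd/even two-step inequality above (or an equivalent componentwise matrix bound); the rest of your structure then goes through.
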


\begin{proof} A straightforward calculation yields $(L_{2,n})_{1\le n\le 40}=(1,2,3,5,7,\ldots ,168220)$.

Let \be S_{1} \ := \ \su_{i=1}^{40}\frac{L_{2,i}}{2^i} \ =\ \frac{318081860251}{2^{37}} \label{eq:L2}, \ee
\be S_2 \ := \ \sui{41} \frac{L_{2, i}}{2^i} \nonumber \ee
and note that 
\be S_1 + S_2 \ = \ E_2. \ee
Moreover, let $L_{2,k}^O \ = \ \big|\rtsf_2\cap [2^{k-1}, 2^k)\cap (2\z + 1)\big|$ be the number of odd right truncatable square-free binary numbers of length-$k$ binary numbers, and similarly $L_{2,k}^E \ = \ \big|\rtsf_2\cap [2^{k-1}, 2^k)\cap 2\z \big|$ the even ones. By modulo $4$ considerations, we have that $L_{2,k+1}^O \ \le \ L_{2,k}^O+L_{2,k}^E$ and therefore,  we have $L_{2,k+1}^E \ \le \ L_{2,k}^O$.
\vspace{0.5cm}

$\begin{aligned}S_2
&\ = \ \frac{L^O_{2,41}+L^E_{2,41}}{2^{41}}+\sui{41}\frac{L^O_{2,i+1}+L^E_{2,i+1}}{2^{i+1}}
\ \le \  \frac{L^O_{2,41}+L^E_{2,41}}{2^{41}}+\sui{41}\frac{2L^O_{2,i}+L^E_{2,i}}{2^{i+1}}  \\
&\ = \ \frac{L^O_{2,41}+L^E_{2,41}}{2^{41}}+\frac{S_2}{2}+\frac{L^O_{2,41}}{2^{42}}+\sui{41}\frac{L^O_{2,i+1}}{2^{i+2}}\\
&\ \le \ \frac{3L^O_{2,41}+2L^E_{2,41}}{2^{42}}+\frac{S_2}{2}+\sui{41}\frac{L^O_{2,i}+L^E_{2,i}}{2^{i+2}}\\
&\ \le \ \frac{5L^O_{2,40}+3L^E_{2,40}}{2^{42}}+\frac{3S_2}{4}.\\
\end{aligned}$\vspace{0.5cm}

Thus, we have \be S_2\ \le\ \frac{5L^O_{2,40}+3L^E_{2,40}}{2^{40}}\ \le \ \frac{5L_{2,40}}{2^{40}}\ = \ \frac{210275}{2^{38}}.\ee

As clearly $S_2 \ \ge \ 0$, $\su_{i=1}^{40}\frac{L_{2,i}}{2^i}\ \le \ E_2\ = \ \su_{i=1}^{40}\frac{L_{2,i}}{2^i} + S_2$. Substituting the numerical results from \eqref{eq:L2} yields the bound. \end{proof}

Although we do not use the base $b = 2$ model for square-free walks elsewhere in this paper, the proof is outlined here since it can be adapted to other bases.

\begin{thm}\label{E10estimate}$2.63297479 \ \le \ E_{10} \ \le \ 2.720303756$.\end{thm}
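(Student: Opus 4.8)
The plan is to follow exactly the strategy established in the proof of Theorem~\ref{E2estimate}, now applied in base $10$. By the preceding lemma we have $E_{10} = \sum_{k=1}^\infty L_{10,k}/10^k$, so the task splits into two pieces: an exactly computed head and a rigorously bounded tail. First I would compute the values $\{L_{10,k}\}$ for $k = 1, \dots, K$ for some feasible cutoff $K$ (the base-$2$ proof used $40$; here growth is faster, so $K$ will be smaller, perhaps around $9$ or $10$), where each $L_{10,k}$ counts right-truncatable square-free numbers with exactly $k$ digits. Setting $S_1 := \sum_{i=1}^{K} L_{10,i}/10^i$ gives the exact head contribution and, since the tail $S_2 := \sum_{i=K+1}^\infty L_{10,i}/10^i \ge 0$, immediately yields the lower bound $S_1 \le E_{10}$.

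For the upper bound I would bound the tail $S_2$. The key is to find a recursive inequality controlling how $L_{10,k+1}$ grows relative to $L_{10,k}$, analogous to the modulo-$4$ bookkeeping with $L^O$ and $L^E$ in the binary case. In base $10$ the obstructions to square-freeness upon appending a digit come from divisibility by $4$ (governed by the last two digits) and by $9$ (governed by the digit sum, i.e. residue mod $9$), along with $25$ and higher prime squares. The clean way to get a rigorous upper bound is to observe that from any $k$-digit right-truncatable square-free number, at most some fixed number $c < 10$ of the ten digit-appendings can possibly yield a square-free number, so that $L_{10,k+1} \le c\, L_{10,k}$. This gives a geometric tail estimate $S_2 \le \sum_{i \ge K+1} c^{\,i-K} L_{10,K}/10^i = \frac{c\, L_{10,K}}{10^K (10 - c)}$, provided $c < 10$. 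Combining, $E_{10} \le S_1 + \frac{c\, L_{10,K}}{10^K(10-c)}$, and one then checks numerically that with the chosen $K$ and $c$ this is below $2.720303756$.

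The main obstacle is making the constant $c$ both valid and small enough. A crude bound like $c = 9$ (one can never append a digit producing the digit sum $\equiv 0 \pmod 9$ in every case, and even digits are often blocked) may not force $S_1 + \frac{c L_{10,K}}{10^K(10-c)}$ below the target with a modest $K$, since $c/(10-c)$ blows up as $c \to 10$. The refinement, as in Theorem~\ref{E2estimate}, is to track the distribution of last-digit or mod-$4$/mod-$9$ residue classes among the length-$k$ right-truncatable numbers and obtain a sharper system of inequalities among subclass counts $L_{10,k}^{(r)}$, yielding an effective growth ratio strictly less than the naive $c$. I would therefore refine the recursion by conditioning on the residue of the current number modulo $4$ (and possibly modulo $9$), derive inequalities of the form $L_{10,k+1}^{(r)} \le \sum_s a_{rs} L_{10,k}^{(s)}$ for an explicit nonnegative matrix $(a_{rs})$, and control the tail by the spectral behaviour of this matrix divided by $10$. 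The second, purely computational obstacle is certifying the exact rational value of $S_1$: the base-$2$ proof records $L_{2,i}$ up to $i=40$, so here I would likewise document the computed $L_{10,i}$ and present $S_1$ as an exact fraction, so that the final numerical inequalities $2.63297479 \le S_1$ and $S_1 + (\text{tail bound}) \le 2.720303756$ are verifiable rather than merely asserted.
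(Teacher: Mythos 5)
Your proposal follows essentially the same route as the paper's proof: the paper computes the head $\{L_{10,n}\}_{1\le n\le 8}=(6, 39, 251, 1601, 10143, 64166, 405938, 2568499)$, gets the lower bound from nonnegativity of the tail, and bounds the tail by exactly the ``refined'' step you settle on, namely parity-split inequalities $L^O_{10,k+1}\ \le\ 5L^O_{10,k}+5L^E_{10,k}$ and $L^E_{10,k+1}\ \le\ 3L^O_{10,k}+2L^E_{10,k}$, iterated geometrically as in Theorem~\ref{E2estimate}. Your crude single-constant bound $L_{10,k+1}\le c\,L_{10,k}$ is only a warm-up that you yourself discard in favor of the residue-class matrix inequalities, which is precisely the paper's method.
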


\begin{proof} The proof is similar to that of Theorem \ref{E2estimate}, using \be(L_{10,n})_{1\le n\le 8}\ = \ (6, 39, 251, 1601, 10143, 64166, 405938, 2568499)\ee and the inequalities 
\begin{eqnarray}
\begin{aligned}
    L^O_{10,k+1}\ &\le \ 5L^O_{10,k}+5L^E_{10,k}; \\
    L^E_{10,k+1}\ &\le \ 3L^O_{10,k}+2L^E_{10,k}.
\end{aligned}
\end{eqnarray}
\end{proof}

\subsection{Additional remarks on the behavior of square-free walks}

We first introduce some notation. Given a number $x$ and a digit $i$ in base $b$, we denote $\overline{xi} = b \cdot x + i$; in other words, we append $i$ to the right of $x$. The following are some remarks relating to the behavior of square-free walks. 

\begin{rek}
The fact that $E_{10} > 6/{(\pi^2 - 6)}$ was expected, since we know that $\overline{xi}$ is more likely to be square-free if $x$ is square-free. This is due to the fact that if $x$ is square-free, then $x \not \equiv 0$ (mod $p^2$) for every prime $p$. In particular, this implies that $[\overline{x0}, \overline{x9}]$ can be any segment of $\mathbb{Z}/p^2\mathbb{Z}$ except $[0,9]$, hence the chance that $\overline{xi} \not\equiv 0$ (mod ${p^2}$), $\forall i \in [0, 9]$ is slightly bigger. Notice that this behavior is consistent for any base $b$.
\end{rek}

\begin{rek}
A Python program\footnote{This script is available at \bburl{https://replit.com/@TudorPopi/non-square-free-greater-square-free\#main.py}} yields that when $x \in \{1, 2, \ldots, 1{,}000{,}000\}$ is square-free, the probability of $\overline{xi}$ is also square-free is around $0.5944$, and when $x \in \{1,\, 2,\, \ldots,\, 1{,}000{,}000\}$ is not square-free, the probability of $\overline{xi}$ being square-free is around $0.5669$. Note that both these values are larger than $6/\pi^2$. This is because small numbers have a larger chance of being square-free. Furthermore, when $x$ is smaller, i.e., $x \in \{1, 2, \ldots, 10^{n}\}, n < 6$, these probabilities are even larger. As $x$ increases, we expect the two probabilities to decrease, but they still have a small difference.
\end{rek}

\begin{rek}\label{remark:Exploration}
We also explore how the starting point affects the length of the walk. As in the prime walks, the experimental expected length of the walk decreases as the starting point increases since small numbers have a bigger chance of being square-free. This is shown in Table \ref{table:square-freecomparison}. Note that the expected length of around $2.71$ (when the starting point increases) is inside the interval given by Theorem \ref{E10estimate}. 
\end{rek}

\begin{rek}
We also consider the frequency of the digits added in our square-free walk and how this changes when we vary the walk's starting point. The results are shown in Table \ref{table:square-freefreq}, and we also make the following related, qualitative observations.
\begin{itemize} \label{sq-free-observations}
    \item Odd digits appear more often than even digits. This is because if $x$ is square-free, then it cannot be a multiple of $4$, hence even digits appear less.
    \item The frequencies of $2$ and $6$ are less than $0, 4$, and $8$. This is because if $x$ and $\overline{xi}$ are square-free and $i$ is even, then if $x$ is odd, by modulo $4$ considerations $i$ is $0, 4$, or $8$, and if $x$ is even, then $i$ is $2$ or $6$. However, $x$ is almost twice more likely to be odd; hence the frequency of $0, 4$, and $8$ is bigger than that of $2$ and $6$.
    \item We have that $5$ appears less often than any other odd digit. Similar to the above, $\overline{x5}$ is not square-free if $x$ ends with $2$ or $7$.
    \item We have that $9$ appears more often than any other digit. This is because if $x$ is square-free, then $x \not \equiv 0$ (mod $9$), hence $\overline{x9} \not \equiv 0$ (mod $9$).
    \item As the starting point increases, the frequencies stabilize. \label{remdig}
\end{itemize}
\end{rek}

\begin{rek}
By looking at the last digit, we can make informed decisions on what digit to append at each step to increase the chance the number is square-free using the Remark \ref{remdig}.
\end{rek}

\renewcommand{\arraystretch}{1.5}
\begin{table}[H]
\centering
\begin{tabular}{p{5pt}c|cccccc}
\multicolumn{2}{}{} & \multicolumn{6}{c}{Number of digits of starting point} \\
     & & 1    &  2   & 3    & 4  & 5  & 6   \\ \cline{2-8}
\multirow{9}{*}{\rotatebox[origin=c]{90}{Digit added}}
  & 0 & 10.1\% &  7.4\% &  7.6\% & 7.5\%  & 7.5\%  & 7.5\% \\
  & 1 & 14.0\% & 13.6\% & 13.2\% & 13.4\% & 13.4\% & 13.4\% \\
  & 2 & 8.4\%  &  5.5\% &  5.3\% & 5.3\%  & 5.3\%  & 5.3\% \\
  & 3 & 13.5\% & 13.5\% & 13.4\% & 13.4\% & 13.4\% & 13.3\% \\
  & 4 &  5.1\% &  8.1\% & 8.0\%  & 8.0\%  & 8.0\%  & 8.0\% \\
  & 5 & 12.1\% & 10.8\% & 10.9\% & 10.8\% & 10.8\% & 10.8\% \\
  & 6 &  8.3\% &  5.5\% &  5.4\% & 5.3\%  & 5.3\%  & 5.3\% \\
  & 7 & 13.4\% & 13.5\% & 13.2\% & 13.3\% & 13.3\% & 13.3\% \\
  & 8 &  4.9\% &  7.4\% &  8.0\% & 8.0\%  & 8.0\%  & 8.0\% \\
  & 9 &  9.7\% & 14.2\% & 14.5\% & 14.6\% & 14.6\% & 14.6\% \\
\end{tabular}
\caption{\label{table:square-freefreq} Comparing the frequency of the digits of blind unlimited square-free walks in base $10$.}
\end{table}

%%%%%%%%%%%%%%%%

\subsection{Blind limited model}\label{subsec:refinedGreedy}

Lastly, we present an alternative to the blind unlimited square-free walk. As stated in Remark \ref{remdig}, odd digits appear most frequently. This observation inspires a different model: if we start with an odd square-free number not divisible by $5$, we can always append $0$ to get a square-free number, since the initial number is not divisible by $2$ or $5$. Then, randomly append one of $1, 3, 7$, and $9$. If the number is square-free, repeat the process, otherwise stop and record the length. Using $\eqref{eq:density}$, we get that the probability that a random odd integer, non-divisible by $5$, is square-free is 
\begin{equation*} 
p \ = \ \prod_{p \text{ prime } \neq 2, 5} \left(1 - \frac{1}{p^2}\right) \ = \ \frac{1}{\zeta(2)} \cdot \frac{1}{1 - \frac{1}{4}}\cdot \frac{1}{1 - \frac{1}{25}} \ = \ \frac{25}{3\pi^2}.
\end{equation*}

\begin{algorithm}[Blind Limited Square-Free Walk]\label{refinedGreedy}
 Start with an odd square-free number not divisible by $5$, then append $0$ to it. Choose one digit uniformly at random from the set $\{1, 3, 7, 9\}$ and append it to the right; if the resulting number is still square-free, append another $0$ and repeat the process.
\end{algorithm}

Let $X$ denote the length of the blind limited square-free walk, starting with the empty string. This is different from starting with one digit: with a $1$-digit start, the starting point is $1$, $3$ or $7$ with $1/3$ probability each, whereas this time our first append is $1$, $3$, $7$ or $9$ with $1/4$ probability each, so it has $1/4$ chance not surviving the first step due to $9$ not being square-free. 

In estimating the theoretical expected value of $Z$ we assume that the result of every appending will be square-free with probability $p$, and all the events are independent. Note that $Z$ is always even, since we append a $0$ at every second digit. Therefore, the theoretical probability is
$$\mathbb{P}[X = 2k] \ = \ p^k(1 - p) \ = \ \frac{25^{k}}{3^k\pi^{2k}} \cdot \frac{3\pi^2 - 25}{3\pi^2} \ = \ \frac{25^k(3\pi^2 - 25)}{3^{k+1}\pi^{2k + 2}}.$$
Analogously to $\eqref{eq:expected-sq}$, we have that 
$$\mathbb{E}[X] \ = \ \frac{2p}{1 - p} \ = \ \frac{50}{3\pi^2 - 25} \ \approx \ 10.84,$$
which is a lot larger than the expected walk length in the original model computed in \eqref{eq:expected-sq}. We present the experimental comparison in Table \ref{table:greedysquare-freecomparison}. The value $11.12$ is close to the theoretical 10.84, which indicates that square-free numbers have good uniformity. Observe that the earlier comment suggests that the expected length with $1$-digit starts should be $4/3$ times that with the empty-string start, and this is confirmed by the experimental values.

\begin{table}[!htbp]
\centering
 \begin{tabular}{||c c c c c c c c||}
 \hline
 Start has exactly $r$ digits & $0$ & $1$ & $2$ & $3$ & $4$ & $5$ & $ 6$\\ 
 \hline\hline
 Greedy square-free walk & 1.68 & 2.79 & 2.76 & 2.72 & 2.71 & 2.71 & 2.71 \\
 \hline
 Alternative square-free walk & 11.12 & 14.82 & 13.24 & 13.37 & 13.47 & 13.49 & 13.50 \\
 \hline
\end{tabular}
\caption{\label{table:greedysquare-freecomparison} Comparing the expected walk lengths of greedy square-free models in base $10$.}
\end{table}

%%%%%%%%%%%%%%%%
\section{Conclusion}\label{sec:conclusion}
In the exploration to find a walk to infinity along some number theory sequences, given we append a bounded number of digits, we have established several results for different sequences. We chose to study prime and square-free walks in part because the primes have zero density, whereas the square-free numbers have a positive density. Where we could not prove exact results, we used stochastic models that approximated the corresponding ``true" values fairly well.

Our stochastic models motivated a conjecture that there is no walk to infinity for primes, a sequence of zero density with no discernible pattern in its occurrence, while a walk to infinity exists for square-free numbers, whose density is a positive constant. We verified this conjecture for these and other sequences, namely perfect squares and primes in smaller bases; indeed, it is impossible to walk to infinity on primes in base $2$ if appending up to $2$ digits at a time, or in bases $3$, $4$, $5$, or $6$ if appending $1$ digit at a time. Lastly, we found a way to append an even bounded number of digits indefinitely for perfect squares.

Stochastic models give us a strong inclination to determine whether we can walk to infinity along certain number theoretic sequences. The results presented in this paper suggest simple speculation that small density leads to the absence of the walks to infinity. However, as we mainly observe sequences based on their density, it remains to be determined how much other factors, such as the sequence's pattern or structure, may contribute as well. As one possible case study, one could consider walks on the Carmichael numbers, which were recently shown to have the property that the ratio of consecutive Carmichael numbers converged to $1$; see \cite{Lar}. Another option is to give more flexibility in where digits are appended; this paper only discussed \textbf{fixed-position models}, where we either append digits only to the left, or only to the right. Allowing digits to be appended to either side, or even in the middle, generates a new set of conjectures to be studied.

\textbf{Acknowledgments:} The authors would like to thank the anonymous reviewer of an early draft of this paper for their numerous and extremely valuable comments and suggestions, including the proof of Lemma \ref{lem:base3To6} parts \ref{B1} and \ref{B4}. Furthermore, we would like to thank the other Polymath REU Walking to Infinity group members in summer 2020 for their contributions to the work. The group consisted of William Ball, Corey Beck, Aneri Brahmbhatt, Alec Critten, Michael Grantham, Matthew Hurley, Jay Kim, Junyi Huang, Bencheng Li, Tian Lingyu, Adam May, Saam Rasool, Daniel Sarnecki, Jia Shengyi, Ben Sherwin, Yiting Wang, Lara Wingard, Chen Xuqing, and Zheng Yuxi. This work was partially supported by NSF grant DMS1561945, Carnegie Mellon University, and Williams College.

%%%%%%%%%%%
\begin{appendices}

\section{Impossibility of Walks}\label{subsec:impossibility}

We demonstrate that it is impossible to walk to infinity in bases $3, 4, 5$, and $6$.

\begin{lem}\label{lem:base3To6}
    The following statements hold.
    \begin{enumerate}[label=\textbf{B\arabic*}]
        \item\label{B1} It is impossible to walk to infinity on primes in base $3$ by appending a single digit at a time to the right. 
        \item\label{B2} It is impossible to walk to infinity on primes in base $4$ by appending a single digit at a time to the right. 
        \item\label{B3} It is impossible to walk to infinity on primes in base $5$ by appending a single digit at a time to the right. 
        \item\label{B4} It is impossible to walk to infinity on primes in base $6$ by appending a single digit at a time to the right. 
    \end{enumerate}
\end{lem}

\begin{proof}
\textbf{Proof of \ref{B1}:} First, note that we can only append a $2$ in base $3$, as appending a $0$ would yield a number divisible by $3$, while appending a $1$ would yield an even number. Therefore, at each step we can only append a $2$. Let $p_1, p_2, \ldots$ be the sequence formed by appending $2$ at each step. We have that
\begin{eqnarray*}
    p_1&=&p_1, \\
    p_2 &=& 3p_1+2, \\ 
    p_3 &=& 9p_1+8, \\
    &\vdots& \\
    p_i &=& 3^{i-1}p_1+3^{i-1}-1, \\
    &\vdots&
\end{eqnarray*}
Therefore, we have that 
\begin{equation}\label{eqbasethreefermat}
    p_{p_1} \equiv  3^{p_1-1}p_1+3^{p_1-1}-1 \equiv 0 \pmod{p_1},
\end{equation}
by Fermat's little theorem. Hence $p_{p_1}$ is composite, and it is impossible to walk to infinity on primes in base $3$ by appending just one digit at a time.

\vspace{0.3 cm}

\textbf{Proof of \ref{B2}:} We confine ourselves to considering only odd digits. Since $4\equiv1$ (mod $3$), appending $1$ to a prime $p\equiv2$ (mod $3$) gives $4p + 1\equiv 0$ (mod $3$), a composite. One can thus append $1$ at most a single time in walking to infinity, and so it suffices to consider the infinite subsequence over which only $3$'s are appended. Denote the elements of this subsequence as $p_1, p_2, \ldots$. Then, in a similar fashion to the extended Cunningham chains, these elements take the form
\begin{eqnarray*}
    p_1 &=& p_1, \\
    p_2 &=& 4p_1+3, \\ 
    p_3 &=& 16p_1+15, \\
    &\vdots& \\
    p_i &=& 4^{i-1}p_1+4^{i-1}-1, \\
    &\vdots&
\end{eqnarray*}
But then
\begin{equation}\label{eqbasefourfermat}
    p_{p_1} \equiv  4^{p_1-1}p_1+4^{p_1-1}-1 \equiv 0 \pmod{p_1},
\end{equation}
by Fermat's little theorem. Hence $p_{p_1}$ is composite, and thus it is impossible to walk to infinity on primes in base $4$ by appending just one digit at a time.%\end{proof}

We shall now apply a similar argument to base $5$.

\vspace{0.3 cm}

\textbf{Proof of \ref{B3}:} In base $5$, parity mandates that we append either $2$ or $4$ at each step. If we have a prime $p \equiv 1$ (mod $3$), then $5p+4 \equiv 0$ (mod $3$), and so we must append a $2$. Moreover, if $p \equiv 1$ (mod $3$) then $5p+2 \equiv 1$ (mod $3$) as well, so we must append \textit{another} $2$, and so on until infinity. 

If $p_1 \equiv 1$ (mod $3$), then we have that
\begin{equation}\label{eqbasefivemod1prime}
p_i \ = \ 5^{i-1}p_1+\frac{5^{i-1}-1}{2}.
\end{equation}Then it is the case that $2p_i\equiv5^{i-1}-1$ (mod ${p_1}$), and so $2p_{p_1}$ is divisible by $p_1$ according to Fermat's little theorem. Therefore $p_{p_1}$ is composite.

On the other hand, if we have a $p\equiv2$ (mod $3$), then $5p+2 \equiv 0$ (mod $3$), so we must append a $4$. But $5p+4 \equiv 2$ (mod $3$) when $p \equiv 2$ (mod $3$), thus requiring that we append $4$'s unto infinity.

Given $p_1\equiv2$ (mod $3$), we find that
\begin{equation}\label{eqbasefivemod2prime}
    p_i \ = \ 5^{i-1}p_1+5^{i-1}-1.
\end{equation} Fermat's little theorem, therefore, allows us to conclude that $p_{p_1}$ is composite. The exception is when $p_1=5$, in which case we write $p_{i}=5^{i-1}p_1+5^{i-1}-1=5^{i-2}(5p_1+4)+5^{i-2}-1=5^{i-2}p_2+5^{i-2}-1$, and observe that $p_{p_2+1}$ is divisible by $p_2$.

Hence, regardless of our initial prime, there must be a composite element in the sequence produced by appending one digit to the right.

Lastly, the same argument can be used for base $6$.

\vspace{0.3 cm}

\textbf{Proof of \ref{B4}:} By parity and modulo $3$ considerations, we can only append $1$ or $5$ at each step. However, note that $1$ can be appended at most $3$ times, as $6p + 1 \equiv p + 1$ (mod $5$). Assume that we have reached a point where we can only append $5$ and let $p_1$ be this prime. Let $p_1, p_2, \ldots$ be the sequence formed by appending $5$ at each step. We have that
\begin{eqnarray*}
    p_1 &=& p_1, \\
    p_2 &=& 6p_1+5, \\ 
    p_3 &=& 6p_1+35, \\
    &\vdots& \\
    p_i &=& 6^{i-1}p_1+6^{i-1}-1, \\
    &\vdots&
\end{eqnarray*}
By Fermat's little theorem, we have that $2^{p_1 - 1} \equiv 1$ (mod $p_1$) and $3^{p_1 - 1} \equiv 1$ (mod $p_1$) hence $6^{p_1 - 1} \equiv 1$ (mod $p_1$). Therefore, we have that
\begin{equation}\label{eqbasesixfermat}
    p_{p_1} \equiv  6^{p_1-1}p_1+6^{p_1-1}-1 \equiv 0 \pmod{p_1},
\end{equation}
and so $p_{p_1}$ is composite. Therefore, it is impossible to walk to infinity on primes in base $6$ by appending just one digit at a time.

\end{proof}

%%%%%%%%%%%%%%%%

\subsection{Starting with 2 (mod 3)}\label{2mod3}

In this subsection, we compare our models with the primes when our starting number is $2$ (mod $3$). The motivation is that we can only append 3 or 9 to such a prime while hoping to remain prime; any other digit would lead to a composite number divisible by 2, 3, or 5. Therefore, we refine our model to only append $3$ or $9$. In this case, the walks are shorter, but the model predictions are closer to the primes. Note that the longest prime walk with starting point $2$ (mod $3$) less than 1{,}000{,}000 has length 10, and is $$\begin{gathered}\{809243,\ 8092439,\ 80924399,\ 809243993,\ 8092439939,\ 80924399393,\ 809243993933,\\8092439939333,\ 80924399393333,\ 809243993933339\}.\end{gathered}$$
Since there are now only two possible digits to append, instead of the four that appeared in equations \eqref{eq:theoreticalModel1Simplified} and \eqref{eq:EV-refined}, the theoretical expected length of the walk is given by
\be\label{eq:modelprime} \frac{9s}{10^s}\left(\sum_{r=1}^{s}\frac{10^{r-1}}{r}\left(\sum_{n=0}^{\infty}\prod_{k = r}^{n-1} \left(1 - \left(1 - \frac{10}{2k\log{10}}\right)^{2}\right)\right)\right).\ee

\begin{table}[ht]
\centering
 \begin{tabular}{||c c c c c c c||}
 \hline
 Start has exactly $r$ digits & $1$ & $2$ & $3$ & $4$ & $5$ & $6$\\
 \hline\hline
 Greedy model & 3.34 & 1.95 & 1.64 & 1.45 & 1.34 & 1.28 \\
 \hline
 Careful greedy model & 5.25 & 3.22 & 2.43 & 2.04 & 1.77 & 1.62\\
 \hline
 Primes & 8.00 & 3.81 & 2.64 & 2.12 & 1.81 & 1.64 \\ 
 \hline
\end{tabular}
\caption{\label{table:primecomparison2mod3} Expected length of the prime walks with starting point $2$ (mod $3$); note the empty string would not be a meaningful start.}
\end{table}

We compare our model \eqref{eq:modelprime} to the primes in Table \ref{table:primecomparison2mod3}. The careful greedy model approximates the real world extraordinarily well, especially as the initial number increases. This is due to the sparsity of the primes, as usually at most one of $\{1, 3, 7, 9\}$ can be appended as the number increases.

%%%%%%%%%%%%%

\subsection{Walking on Mersenne Primes}\label{subsec: Mersenne}

In this subsection, we study walks on Mersenne Primes. Recall the following definition:

\begin{defi}
    A \textbf{Mersenne prime} is a prime of the form $M_n = 2^n - 1, n \in \mathbb{N}.$
\end{defi}

A necessary (but not sufficient) condition for $M_n$ to be prime is that $n$ is prime. We now prove the following result on Mersenne Primes:

\begin{thm}\label{Thm: MErsenneWalk}
    The only nontrivial walk on Mersenne primes is $3 \to 31,$ i.e. if $M_q = 10M_p + i,$ then $p = 2, q = 5, i = 1.$
\end{thm}

\begin{proof}
Let $M_q = 10M_p + i$ be a Mersenne prime walk, where $p$ and $q$ are prime and $i \in \{0, 1,\ldots, 9\}.$ We have that 
\be \label{Eq: mersenne1}
M_q \ = \ 10M_p + i \ \Rightarrow \ 2^q - 1 \ = \ 10(2^p - 1) + i \Rightarrow 9 - i \ = \ 5 \cdot 2^{p + 1} - 2^q
\ee
    By mod $5$ considerations $i \neq 5$, and due to parity, $i$ must be odd. We now consider the remaining possible values of $i$.
    \begin{itemize}
        \item If $i = 1,$ then $8 = 5\cdot 2^{p + 1} - 2^q$: for $q \le 5$, it is easy to check that the only pair that works is $(p, q) = (2, 5).$ If $q > 5,$ then $p = 2$ since the right hand side of \eqref{Eq: mersenne1} is divisible by $8$ but not by $16$. But then the right-hand side of \eqref{Eq: mersenne1} is negative, whereas the left-hand side is positive, which is a contradiction.

        \item If $i = 3$ or $i = 7$ then the left-hand side of \eqref{Eq: mersenne1} is divisible by $2$ but not by $4$. Since $8$ divides $5 \cdot 2^{p+1}$ (as $p \ge 2$), we must have that $q = 1,$ which is false since $q$ is prime.

        \item If $i = 5,$ the left-hand side is divisible by $4$ but not by $8.$ Since $8$ divides $5 \cdot 2^{p+1}$ (as $p \ge 2$), we must have that $q = 2.$ But then $8 = 5 \cdot 2^{p + 1},$ which obviously has no integer solutions.
    \end{itemize}
    Therefore, the only nontrivial walk on Mersenne Primes is $3 \to 31.$
\end{proof}

\begin{rek}\label{rmk: perfect}
    Another interesting future direction would be to study walks on perfect numbers.
\end{rek}

\end{appendices}

%%%%%%%%%%%%%

\bigskip
\hrule
\bigskip

\noindent 2000 {\it Mathematics Subject Classification}:
Primary 11A41; Secondary 11B05.

\noindent \textit{Keywords:}
walking to infinity, stochastic model, prime walk, square-free number, perfect square, appending unbounded digits.

\bigskip
\hrule
\bigskip

\noindent (Concerned with sequences
\seqnum{A000290},  
\seqnum{A005117}, and 
\seqnum{A024770}.) 

\bigskip
\hrule
\bigskip

\end{document}